\documentclass{amsart}

\usepackage[T1]{fontenc}
\usepackage[utf8]{inputenc}
\usepackage{microtype}
\usepackage[colorlinks,citecolor=blue,urlcolor=blue]{hyperref}

\usepackage[backend=bibtex,doi=true,isbn=false,url=false,
            style=alphabetic,maxbibnames=99,maxalphanames=99]{biblatex}
\usepackage{amsthm,amsmath,amsfonts,amssymb}
\usepackage{mathtools}
\usepackage{nicefrac}
\usepackage{graphicx}
\usepackage[config, labelfont={normalsize}]{caption,subfig}
\usepackage[backgroundcolor=yellow]{todonotes}
\usepackage{pgfplots}
\usetikzlibrary{colorbrewer}
\usepgfplotslibrary{colorbrewer}
\usetikzlibrary{decorations.pathreplacing}

\colorlet{basinA}{Paired-B}  %
\colorlet{basinB}{Paired-F}  %
\colorlet{clrA}{Paired-F}    %
\colorlet{clrB}{Paired-B}    %
\colorlet{clrC}{Paired-D}    %

\tikzset{
  edgeA/.style={clrA, line width=1.2pt,
    densely dashed},
  edgeB/.style={clrB, line width=1.8pt, solid},
  edgeC/.style={clrC, line width=1.2pt, dotted},
  topnode/.style={circle, draw, fill=black,
    inner sep=0pt, minimum size=4pt},
  botnode/.style={circle, draw, thick,
    inner sep=0pt, minimum size=8pt},
}

\newcommand{\edgestyle}[1]{%
  \ifcase#1\or edgeA\or edgeB\or edgeC\fi
}
\newcommand{\wallfill}[1]{%
  \ifcase#1\or clrA!20\or clrB!20\or clrC!20\fi
}
\newcommand{\coloringdiagram}[7]{%
  \begin{tikzpicture}[
    baseline=(current bounding box.center),
  ]
    \foreach \i in {1,...,6} {
      \node[topnode] (t\i)
        at ({(\i-1)*0.7}, 2.0) {};
      \node[above=2pt, font=\tiny] at (t\i)
        {$\i$};
    }
    \node[botnode, fill=\wallfill{1}] (b1)
      at (0.7, 0) {};
    \node[below=3pt, font=\footnotesize] at (b1)
      {$1$};
    \node[botnode, fill=\wallfill{2}] (b2)
      at (1.75, 0) {};
    \node[below=3pt, font=\footnotesize] at (b2)
      {$2$};
    \node[botnode, fill=\wallfill{3}] (b3)
      at (2.8, 0) {};
    \node[below=3pt, font=\footnotesize] at (b3)
      {$3$};
    \draw[\edgestyle{#1}] (t1) -- (b#1);
    \draw[\edgestyle{#2}] (t2) -- (b#2);
    \draw[\edgestyle{#3}] (t3) -- (b#3);
    \draw[\edgestyle{#4}] (t4) -- (b#4);
    \draw[\edgestyle{#5}] (t5) -- (b#5);
    \draw[\edgestyle{#6}] (t6) -- (b#6);
    \node[font=\footnotesize] at (1.75, -0.8)
      {$c = #7$};
  \end{tikzpicture}%
}

\newcommand{\coloringdiagramTwo}[5]{%
  \begin{tikzpicture}[
    baseline=(current bounding box.center),
  ]
    \foreach \i in {1,...,4} {
      \node[topnode] (t\i)
        at ({(\i-1)*0.7}, 2.0) {};
      \node[above=2pt, font=\tiny] at (t\i)
        {$\i$};
    }
    \node[botnode, fill=\wallfill{1}] (b1)
      at (0.35, 0) {};
    \node[below=3pt, font=\footnotesize] at (b1)
      {$1$};
    \node[botnode, fill=\wallfill{2}] (b2)
      at (1.75, 0) {};
    \node[below=3pt, font=\footnotesize] at (b2)
      {$2$};
    \draw[\edgestyle{#1}] (t1) -- (b#1);
    \draw[\edgestyle{#2}] (t2) -- (b#2);
    \draw[\edgestyle{#3}] (t3) -- (b#3);
    \draw[\edgestyle{#4}] (t4) -- (b#4);
    \node[font=\footnotesize] at (1.05, -0.8)
      {$c = #5$};
  \end{tikzpicture}%
}

\makeatletter
\@ifclasswith{amsart}{externalize}%
{
  \usetikzlibrary{external}
  \tikzexternalize[
    optimize=false,
    prefix=tikz/,
    mode=list and make]
  \makeatletter
  \renewcommand{\todo}[2][]{\tikzexternaldisable\@todo[#1]{#2}\tikzexternalenable}
  \makeatother
}%
{
  \newcommand{\tikzexternaldisable}{}
  \newcommand{\tikzexternalenable}{}
}
\makeatother

\usepackage{subfiles}
\usepackage{aliascnt}
\usepackage{enumitem}
\usepackage[capitalize,noabbrev]{cleveref}

\numberwithin{equation}{section}

\theoremstyle{definition}
\newtheorem{definition}{Definition}[section]

\theoremstyle{plain}
\newaliascnt{theorem}{definition}
\newtheorem{theorem}[theorem]{Theorem}
\aliascntresetthe{theorem}

\newaliascnt{proposition}{definition}
\newtheorem{proposition}[proposition]{Proposition}
\aliascntresetthe{proposition}

\newaliascnt{lemma}{definition}
\newtheorem{lemma}[lemma]{Lemma}
\aliascntresetthe{lemma}

\newaliascnt{corollary}{definition}
\newtheorem{corollary}[corollary]{Corollary}
\aliascntresetthe{corollary}

\theoremstyle{remark}
\newaliascnt{example}{definition}
\newtheorem{example}[example]{Example}
\aliascntresetthe{example}

\newaliascnt{remark}{definition}
\newtheorem{remark}[remark]{Remark}
\aliascntresetthe{remark}

\newaliascnt{conjecture}{definition}

\aliascntresetthe{conjecture}

\crefname{theorem}{Theorem}{Theorems}
\Crefname{theorem}{Theorem}{Theorems}
\crefname{lemma}{Lemma}{Lemmas}
\Crefname{lemma}{Lemma}{Lemmas}
\crefname{corollary}{Corollary}{Corollaries}
\Crefname{corollary}{Corollary}{Corollaries}
\crefname{proposition}{Proposition}{Propositions}
\Crefname{proposition}{Proposition}{Propositions}
\crefname{definition}{Definition}{Definitions}
\Crefname{definition}{Definition}{Definitions}
\crefname{example}{Example}{Examples}
\Crefname{example}{Example}{Examples}
\crefname{remark}{Remark}{Remarks}
\Crefname{remark}{Remark}{Remarks}
\crefname{conjecture}{Conjecture}{Conjectures}
\Crefname{conjecture}{Conjecture}{Conjectures}

\newcommand{\ZZ}{\mathbb{Z}}
\newcommand{\RR}{\mathbb{R}}
\newcommand{\PP}{\mathbb{P}}
\newcommand{\EE}{\mathbb{E}}
\DeclareMathOperator{\sgn}{sgn}

\DeclareMathOperator{\erfc}{erfc}

\DeclareMathOperator{\Pf}{Pf}
\DeclareMathOperator{\Tr}{Tr}
\DeclareMathOperator{\Var}{Var}
\DeclareMathOperator{\Cov}{Cov}

\newcommand{\ZZhalf}{\ZZ'}

\title%
[Pfaffian structure of basin walls for coalescing particles]%
{Pfaffian structure of basin walls \\
 for coalescing particles}

\author{Piotr \'Sniady}
\address{Institute of Mathematics, Polish Academy of Sciences,
         ul.~\'Sniadeckich 8, \mbox{00-656~Warszawa,} Poland}
\email{psniady@impan.pl}

\begin{document}

\begin{abstract}
Coalescing particles on a line merge when they meet.
As they do, their basins of attraction merge and the
walls between basins disappear. If every site
is initially occupied, these walls at any positive
time form a \emph{Pfaffian point process}: all
correlation functions are determined by pairwise
quantities arranged in antisymmetric matrices.
Tribe, Zaboronski, Garrod, and Poplavskyi established
this structure using analytic methods for
time-homogeneous dynamics; our combinatorial approach
works for any skip-free process (one where particles
cannot change order without first meeting).
We show that the Pfaffian structure lives naturally at
the wall level: we prove an exact
Pfaffian empty-interval formula for the
walls and compute the cumulants of the wall
indicators (higher-order analogs of the variance) as
signed sums of probabilities that independent particles
started at the interval endpoint positions reorder
in specific ways. A structural property of
these sums, indecomposability---every nonzero term
couples all wall positions together---implies a
central limit theorem for the wall count.
A checkerboard duality identifies the walls of one
process with the surviving particles of the dual
process. This covers totally asymmetric dynamics and
position-dependent transition rules, and for Brownian
motion recovers the known Pfaffian point process.
\end{abstract}

\subjclass[2020]{Primary 60K35; Secondary 60F05, 60J65,
  60G55, 15A15}

\keywords{coalescing random walks, Pfaffian point process,
    basin walls, checkerboard duality, cumulant coloring formula,
    central limit theorem, voter model}

\maketitle

\section{Introduction}
\label{sec:introduction}
\subsection{Coalescence and basin walls}
\label{sec:intro-setting}

\subsubsection{Coalescence}

When identical particles on a line collide, they merge
and continue as one
(\Cref{fig:coalescence-intro}). On a lattice,
coalescence describes the voter
model~\cite{HolleyLiggett1975}; in continuous space,
the $A + A \to A$ reaction-diffusion
system~\cite{DoeringBenAvraham1988}. For coalescing
Brownian motions starting from the \emph{maximal
entrance law} (every point initially occupied), the
system comes down from infinity: at any positive time,
only finitely many survivors remain per bounded
interval (Arratia~\cite{Arratia1979}).

\begin{figure}[t]
\centering

\begin{tikzpicture}[scale=0.75]

\def\W{11.5}  %
\def\H{6.0}   %

\draw[gray, thick] (-0.3, 0) -- (\W, 0);
\draw[gray, thick] (-0.3, \H) -- (\W, \H);

\node[gray] at (-0.8, 0) {$\cdots$};
\node[gray] at (\W + 0.5, 0) {$\cdots$};
\node[gray] at (-0.8, \H) {$\cdots$};
\node[gray] at (\W + 0.5, \H) {$\cdots$};

\draw[->, thick, black] (-1.2, 0.4) -- (-1.2, \H - 0.4);
\node[rotate=90, black] at (-1.6, \H/2) {time};

\node[right, font=\small] at (\W + 0.9, 0) {$t = 0$};
\node[right, font=\small] at (\W + 0.9, \H) {$t > 0$};

\draw[black!50, line width=0.4pt]
  (0.0, 0.0) -- (0.38, 0.75) -- (0.0, 1.5) -- (0.38, 2.25);
\draw[black!50, line width=1.0pt]
  (0.38, 2.25) -- (0.75, 3.0) -- (1.12, 3.75) -- (1.5, 4.5);
\draw[black!50, line width=1.4pt]
  (1.5, 4.5) -- (1.12, 5.25) -- (1.5, 6.0);
\draw[black!30, line width=0.4pt]
  (0.75, 0.0) -- (1.12, 0.75) -- (0.75, 1.5) -- (0.38, 2.25);
\draw[black!30, line width=0.4pt]
  (1.5, 0.0) -- (1.12, 0.75);
\draw[black!30, line width=0.4pt]
  (2.25, 0.0) -- (2.62, 0.75) -- (2.25, 1.5) -- (2.62, 2.25)
  -- (2.25, 3.0) -- (1.88, 3.75) -- (1.5, 4.5);

\draw[basinA, line width=0.4pt]
  (3.0, 0.0) -- (3.38, 0.75);
\draw[basinA, line width=0.7pt]
  (3.38, 0.75) -- (3.75, 1.5) -- (4.12, 2.25);
\draw[basinA, line width=1.4pt]
  (4.12, 2.25) -- (4.5, 3.0) -- (4.12, 3.75) -- (3.75, 4.5)
  -- (3.38, 5.25) -- (3.0, 6.0);
\draw[basinA!40, line width=0.4pt]
  (3.75, 0.0) -- (3.38, 0.75);
\draw[basinA!40, line width=0.4pt]
  (4.5, 0.0) -- (4.88, 0.75) -- (4.5, 1.5) -- (4.12, 2.25);
\draw[basinA!40, line width=0.4pt]
  (5.25, 0.0) -- (4.88, 0.75);
\draw[basinA!40, line width=0.4pt]
  (6.0, 0.0) -- (5.62, 0.75) -- (5.25, 1.5) -- (4.88, 2.25)
  -- (4.5, 3.0);

\draw[basinB, line width=0.4pt]
  (6.75, 0.0) -- (7.12, 0.75) -- (7.5, 1.5) -- (7.88, 2.25);
\draw[basinB, line width=1.4pt]
  (7.88, 2.25) -- (7.5, 3.0) -- (7.12, 3.75) -- (6.75, 4.5)
  -- (7.12, 5.25) -- (7.5, 6.0);
\draw[basinB!40, line width=0.4pt]
  (7.5, 0.0) -- (7.88, 0.75) -- (8.25, 1.5) -- (7.88, 2.25);
\draw[basinB!40, line width=0.4pt]
  (8.25, 0.0) -- (8.62, 0.75) -- (8.25, 1.5);
\draw[basinB!40, line width=0.4pt]
  (9.0, 0.0) -- (8.62, 0.75);

\draw[black!50, line width=0.4pt]
  (9.75, 0.0) -- (10.12, 0.75);
\draw[black!50, line width=0.7pt]
  (10.12, 0.75) -- (10.5, 1.5) -- (10.12, 2.25)
  -- (9.75, 3.0) -- (10.12, 3.75) -- (10.5, 4.5)
  -- (10.88, 5.25) -- (11.25, 6.0);
\draw[black!30, line width=0.4pt]
  (10.5, 0.0) -- (10.12, 0.75);

\foreach \px in {0.00, 0.75, 1.50, 2.25, 3.00, 3.75, 4.50,
                 5.25, 6.00, 6.75, 7.50, 8.25, 9.00, 9.75, 10.50} {
  \fill (\px, 0) circle (1.5pt);
}

\foreach \bx in {2.62, 6.38, 9.38} {
  \fill (\bx, 0.30) -- ++(-0.22, -0.42) -- ++(0.44, 0) -- cycle;
}

\foreach \hx in {1.50, 3.00, 7.50, 11.25} {
  \fill (\hx, \H) circle (2.5pt);
}

\end{tikzpicture}

\caption{Coalescing random walks starting from every site.
Paths merge on contact; line weight increases with each merger.
Walls (triangles) mark the boundaries between basins of
attraction at~\mbox{$t = 0$}; survivors (large dots) are the
particles that remain at~$t > 0$, one per basin.}
\label{fig:coalescence-intro}
\end{figure}

Tribe and Zaboronski~\cite{TZ2011} proved that for
coalescing Brownian motions under the maximal
entrance law, the surviving particle positions form a
\emph{Pfaffian point process}; Garrod, Poplavskyi,
Tribe, and Zaboronski~\cite{GarrodPTZ2018} extended
this to continuous-time random walks on~$\ZZ$ with
spatially inhomogeneous rates and all deterministic
initial conditions. In a Pfaffian point process, all
correlation functions are Pfaffians of $2 \times 2$
matrix kernels. A Pfaffian is a signed sum over
perfect matchings of the rows and columns of an
antisymmetric matrix---the analogue of a determinant
when the underlying combinatorics involves pairings
rather than permutations.

\subsubsection{The wall perspective}

This paper takes seriously the ``dual particles''
of Garrod, Poplavskyi, Tribe, and
Zaboronski~\cite[Remark after
Lemma~5]{GarrodPTZ2018}, interprets their kernel as
a crossing-or-meeting probability of independent
particles, and develops the resulting wall
perspective: the Pfaffian structure belongs naturally
not to the surviving particles but to the
\emph{walls}---the boundaries between the
particles' basins of attraction (called partition
points by Arratia~\cite{Arratia1979},
see \Cref{fig:coalescence-intro}).
As particles coalesce,
their basins merge and walls disappear; each
surviving wall contributes a $2 \times 2$ block
to the Pfaffian matrix. The surviving
particles inherit the Pfaffian structure only because
the \emph{checkerboard duality} identifies them with
walls of the dual process
(\Cref{sec:intro-duality}).

\subsubsection{Scope}

The wall perspective yields more than the Pfaffian
point process. From the empty-interval formula
(\Cref{thm:intro-pfaffian} below) we derive an
explicit formula for the cumulants of the wall
count, and a structural property of this formula
(indecomposability) that implies a central limit
theorem. The results also extend to discrete-time systems
with arbitrary inhomogeneous transition probabilities,
including
totally asymmetric dynamics.
They work for any \emph{skip-free} process
(transitions only to neighboring states, so that
particles cannot change order without first
meeting~\cite{KM1959}) and any deterministic initial
condition, but treat only pure coalescence (not the
mixed coalescence-annihilation models
of~\cite{GarrodPTZ2018}).

\subsection{Results for walls}
\label{sec:intro-results}

\subsubsection{The empty-interval formula}

A wall is a boundary between two adjacent basins of
attraction---a half-integer in
$\ZZhalf = \ZZ + \tfrac{1}{2}$ for processes
on~$\ZZ$, or a real number for processes on~$\RR$.

\begin{theorem}[Pfaffian empty-interval formula]
\label{thm:intro-pfaffian}
Consider a coalescing skip-free process with every
point initially occupied. Run the system for
time~$T > 0$, and let
$a_1 < b_1 \leq a_2 < \cdots
\leq a_n < b_n$ be given. Then
\[
\PP\big(\text{no wall in any
  $(a_i, b_i)$}\big) = \Pf(A),
\]
where $A$ is the $2n \times 2n$ antisymmetric matrix
indexed by the $2n$ endpoints
$a_1, b_1, a_2, b_2, \ldots, a_n, b_n$
(numbered $1, 2, \ldots, 2n$ in this order).
For $k < l$, the entry~$A_{kl}$ is the probability
that two independent copies of the underlying
process, started at the $k$-th and $l$-th
endpoints and run for time~$T$, \emph{cross or
meet} (their ordering reverses or they occupy the
same position).
\end{theorem}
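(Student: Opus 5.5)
The plan is to first turn the event into a statement about finitely many coalescing particles, and then evaluate that by a Pfaffian (Stembridge-type) sign-reversing involution on independent paths.

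\textbf{Step 1: reduction to coalescence of boundary particles.} On $\ZZ$, with $a_i,b_i\in\ZZhalf$, the interval $(a_i,b_i)$ contains no wall exactly when all sites $a_i+\tfrac12,\dots,b_i-\tfrac12$ lie in one basin at time~$T$. Because the process is skip-free and coalescing trajectories preserve order, this happens iff the two extreme particles have coalesced by time~$T$: every particle in between is squeezed between them and merges with both. So the event in \Cref{thm:intro-pfaffian} is the intersection over $i$ of the events ``the $i$-th pair of boundary particles has met by time~$T$''. It depends only on the $2n$ trajectories started next to the endpoints. I would phrase everything through the $2n$ particles ``attached'' to the endpoints $a_1,b_1,\dots,a_n,b_n$, using the convention (to be matched to the paper's) that makes ``meet'' for a coalescing pair correspond to ``cross or meet'' for independent copies started at the endpoints.

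\textbf{Step 2: coupling with independent walkers.} I would realize the coalescing system from $2n$ independent copies of the underlying process. When two trajectories meet, the lower-labelled one continues and the other is discarded, or equivalently its remaining path is swapped. Skip-free dynamics guarantee that two independent walkers can change order only by first occupying a common position (in discrete time: a common site, or exchanged neighbouring sites, which the ``cross'' clause absorbs). Hence ``cross or meet'' is the same as ``their paths intersect'' in the Karlin--McGregor sense~\cite{KM1959}.

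\textbf{Step 3: the Pfaffian.} Expand
\[
\Pf(A)=\sum_{\pi}\sgn(\pi)\prod_{\{k,l\}\in\pi}A_{kl}
\]
over perfect matchings~$\pi$ of the $2n$ endpoints. Each term is the probability, for independent walkers, that every matched pair intersects. This gives a weighted sum over pairs (matching, independent path family such that each matched pair intersects). On this set I would define a sign-reversing, measure-preserving involution.
\begin{itemize}
\item Find the earliest time at which two walkers belonging to \emph{different} blocks of $\pi$ meet, breaking ties by a canonical rule on positions and labels.
\item Exchange the tails of these two walkers after that time.
\item Re-pair the matching accordingly, so that $\pi$ changes by a transposition and its sign flips.
\end{itemize}
The fixed points should be the configurations where no cross-block interaction spoils the structure. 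With the non-crossing, nested-free order $a_1<b_1\le a_2<\cdots$ of the endpoints, these reduce to $\pi=\{\{a_i,b_i\}\}$, which has sign $+1$. By Step 2 their total weight is exactly the probability that each pair $(a_i,b_i)$ coalesces, which by Step 1 is the empty-interval probability.

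\textbf{Main obstacle.} The hard step is Step 3: checking that the tail swap really is an involution that preserves the ``each matched pair intersects'' condition, and that the fixed-point set is exactly the coalescence event. Two points need particular care:
\begin{itemize}
\item the boundary case $b_i=a_{i+1}$, where two distinct walkers start at the same point;
\item in discrete time, the ``cross without meeting'' moves and the canonical choice among simultaneous meetings.
\end{itemize}
If the direct involution proves awkward, I would fall back on induction on~$n$. Conditioning on the first cross-pair interaction, I would verify that both sides obey the same Pfaffian row expansion
\[
\Pf(A)=\sum_{l}(-1)^{l}A_{1l}\Pf(A_{\hat1\hat l}).
\]
The interpretation would be as an inclusion--exclusion over which partner the walker from $a_1$ first meets. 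The base case $n=1$ is Step 1 together with Step 2.
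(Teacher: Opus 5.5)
Your Step~1 matches the paper's first reduction. The gap is in Step~3, on two counts.

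\textbf{The involution is not sign-reversing.} You key it to the first meeting between walkers in \emph{different} blocks of~$\pi$. But matched partners may already have met before that time~$\tau$. If $i$ met $i'$ and $j$ met $j'$ before~$\tau$, then after the tail swap the old matching~$\pi$ is still satisfied. The transposed matching $\{i,j'\},\{j,i'\}$ need not be, since the new $j$-path (head of~$j$, tail of~$i$) need not meet~$i'$. So nothing flips sign. If only one of the two pairs met before~$\tau$, neither matching is guaranteed. The version that works keys on the first meeting among \emph{all} walkers still in play. Then every partner meeting occurs at a time $\geq\tau$. When the first-meeting pair is unmatched, swap-and-transpose is a measure-preserving, sign-reversing involution. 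When it is matched, delete that pair and recurse via the strong Markov property.

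\textbf{The surviving terms are not the coalescence event.} Even with that fix, what survives is not ``$\pi=\{\{a_i,b_i\}\}$ with no cross-block interaction.'' It is the set of configurations where $\pi$ is the matching produced by \emph{annihilating} dynamics (at each first meeting among survivors, both walkers are removed) and all walkers are removed by time~$T$. These $\pi$ range over all non-crossing matchings, each of sign~$+1$, so $\Pf(A)$ is the total-annihilation probability. Pairwise coalescence routinely involves cross-block meetings, and your Step~2 coupling does not turn it into your fixed-point set.

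The touching case $b_1=a_2$ shows this clearly. Walkers $2$ and~$3$ meet at time~$0$, so your scheme has no fixed points at all. Yet the probability is $\Pf(A)=A_{14}>0$, using $A_{12}=A_{13}$, $A_{24}=A_{34}$ and $A_{23}=1$. This value comes entirely from the matching $\{\{1,4\},\{2,3\}\}$.

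The missing link is the paper's cancellative labeling (\Cref{prop:parity-trick}). Give the $2n$ particles label~$1$ modulo~$2$. Since clusters consist of consecutive particles, every cluster carries label~$0$ if and only if each pair $(a_i,b_i)$ lies in one cluster. So pairwise coalescence equals total annihilation, and the Pfaffian annihilation formula (which the paper cites from its companion) applies.

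Your fallback needs the same cancellation. The terms $(-1)^l A_{1l}\Pf(A_{\hat1\hat l})$ do not correspond termwise to ``walker~$1$ first meets walker~$l$'': the odd-$l$ terms are nonpositive and must cancel against parts of the even-$l$ ones. A minor point: in the paper's convention the endpoints are particle sites (integers on~$\ZZ$, with walls at half-integers), so Step~1 needs no shift.
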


Specific instances include:
\begin{itemize}
\item coalescing Brownian motions on~$\RR$
  (\Cref{sec:bm-example});
\item symmetric random walks (Arratia's original
  setting; \Cref{ex:symmetric});
\item totally asymmetric dynamics, where particles
  jump only in one direction
  (\Cref{ex:asymmetric});
\item continuous-time Poisson jumps with space-time
  varying rates (\Cref{ex:continuous-time}).
\end{itemize}

\subsubsection{Pfaffian point process on the lattice}

On the discrete lattice, the empty-interval formula
directly implies that the wall configuration is a
Pfaffian point process: occupancy at
site~$x \in \ZZhalf$ indicates the presence of a
wall in the unit
interval~$(x - \tfrac{1}{2},\, x + \tfrac{1}{2})$,
and inclusion-exclusion converts the
empty-interval Pfaffian into a $2 \times 2$ matrix
kernel whose entries are discrete differences of the
crossing-or-meeting probabilities
(\Cref{prop:discrete-pp}).

\subsubsection{Brownian motion}

For coalescing Brownian motions, the
crossing-or-meeting probabilities~$A_{kl}$ are
complementary error functions of the endpoint
separations (\Cref{sec:bm-example}).
Differentiating the Pfaffian empty-interval formula
and taking the zero-length limit recovers the
Tribe--Zaboronski Pfaffian point process
(\Cref{sec:pfaffian}), the continuous analogue of
the discrete kernel above.

\subsubsection{Cumulants and the central limit theorem}

The Pfaffian formula also yields explicit
combinatorial expressions for the \emph{cumulants} of
the wall count---the connected part of the $k$-point
moment (\Cref{thm:coloring-formula}). These
expressions cleanly separate combinatorics from
probability: the cumulant is a weighted average of
universal integer coefficients~$c(I)$ over
random orderings of independent particles started
at the interval endpoints; the coefficients depend
only on the ordering pattern, not on the process.

A key structural property is
\emph{indecomposability}: every nonzero term in the
$k$-th cumulant couples all $k$ wall positions
together, preventing the sum from splitting into
independent pieces. This explains \emph{why} the wall
process has only local correlations, gives the sign
of the covariance (repulsion) for free, and yields
the cumulant bound $\kappa_k(N_L) = O(L)$ for
$k \geq 2$ (\Cref{thm:clt}).

\subsection{Why Pfaffians?}
\label{sec:intro-why-pfaffians}

\subsubsection{Three links}

Why should the wall process be Pfaffian? The proof
of \Cref{thm:intro-pfaffian} has three links. 

First,
no wall in an interval~$(a,b)$ means that the
particles starting at its endpoints~$a$ and~$b$ have met---a
pairwise coalescence event. 

Second, the \emph{cancellative labeling}
converts pairwise coalescence into annihilation
(\Cref{sec:cancellative-labeling}). 

Third, annihilation
naturally involves matchings: $2k$ particles must
pair up to completely annihilate
($A + A \to \emptyset$), and the different pairings
are exactly the perfect matchings of
$\{1, \ldots, 2k\}$---the combinatorial objects
that define a Pfaffian. The particle trajectories
interact (annihilation!), so the total annihilation
probability does not decompose into pairwise terms
in any obvious way. The companion
paper~\cite{Sniady2026annihilation} proves that the
total annihilation probability nonetheless equals a
Pfaffian---a signed sum over matchings of products
of pairwise crossing probabilities---under the bare
Karlin--McGregor assumptions (order preservation and
the strong Markov property), with no lattice or
generator needed. This single result replaces the
PDE uniqueness argument
of~\cite{TZ2011,GarrodPTZ2018} and applies uniformly
to discrete and continuous processes.

\subsubsection{Connection with Markov duality}

The above three-step reduction---from the interacting
coalescing system to crossing probabilities of
independent particles---has the same structure as a
Markov duality. Garrod, Poplavskyi, Tribe, and
Zaboronski~\cite[Remark after
Lemma~5]{GarrodPTZ2018} observed that their
spin-pair duality fits the standard Markov duality
framework \cite[Chapter~4]{EthierKurtz1986}, with
the dual process being instantly annihilating
particles, and that the same dual process governs
both coalescing and annihilating
systems---suggesting a common origin for their
Pfaffian structure. The cancellative
labeling makes this precise: coalescence reduces to
annihilation, so both are Pfaffian for the same
reason. The annihilation
formula~\cite{Sniady2026annihilation} then decomposes
the duality into two explicit combinatorial steps,
replacing the generator computation by a
sign-reversing involution and extending the result to
inhomogeneous discrete systems where no generator is
available.

\subsection{From walls to particles:
  the checkerboard duality}
\label{sec:intro-duality}

The results above describe walls. To translate
them into statements about surviving
particles---the objects most directly measured in
applications---requires a duality on the planar
lattice.

The duality builds on the graphical construction of
Harris~\cite{Harris1978} and Arratia's ``percolation
substructure''~\cite{Arratia1979}
(\Cref{fig:checkerboard,sec:checkerboard-decomposition}).
The independent random choices that define the
coalescing system simultaneously generate two
complementary non-crossing forests on interleaved
sublattices: a \emph{backward opinion forest}
carrying coalescent lineages, and a \emph{forward
boundary forest} carrying the walls. Different
coordinate choices on the lattice yield different
particle dynamics (\Cref{sec:examples}).

\begin{figure}[t]
\centering
\input{figures/checkerboard-duality.tex}
\caption{Checkerboard duality on the $(u,v)$ lattice.
At each $\ZZhalf^2$-vertex, a random binary choice
(copy from West or South) determines two complementary
forests.
The \textbf{backward opinion forest} (double blue
arrows, on~$\ZZhalf^2$) traces ancestral lineages in
the direction of decreasing~$u+v$. The
\textbf{forward boundary forest} (on~$\ZZ^2$) has
thick red arrows at boundary vertices (where
neighboring opinions differ) and thin red arrows at
non-boundary vertices. The thick diagonal line marks
$u + v = 0$, where all opinions are initially
distinct and every $\ZZ^2$-vertex carries a boundary.}
\label{fig:checkerboard}
\end{figure}

The surviving boundaries of the forward process
are precisely the walls of the backward process
(\Cref{sec:checkerboard-duality}). To study the
surviving particles at time~$T$---which are the
walls of the dual backward process---one applies
the wall-level results of
\Cref{sec:walls-pfaffian,sec:cumulants,sec:clt}
to the dual process. The wall-level results require
the maximal entrance law, but a clamping argument
extends the particle-level results to all
deterministic initial conditions
(\Cref{sec:general-initial}).

\subsection{Prior work}
\label{sec:intro-prior-work}

\subsubsection{Analytic approaches}

The analytic methods
of~\cite{TZ2011,GarrodPTZ2018} and the
checkerboard duality all require skip-free
motion, but their scopes differ.

Tribe and Zaboronski~\cite{TZ2011} established the
Pfaffian point process for coalescing Brownian
motions under the maximal entrance law. Garrod,
Poplavskyi, Tribe, and
Zaboronski~\cite{GarrodPTZ2018} extended this to
continuous-time random walks on~$\ZZ$ with spatially
inhomogeneous rates and all deterministic initial
conditions via the spin-pair duality, yielding a
$2 \times 2$ matrix kernel giving all correlation
functions. Tribe and
Zaboronski~\cite{TribeZaboronski2026} further
extended it to every entrance law, again via the
spin-pair duality. FitzGerald, Tribe, and
Zaboronski~\cite{FitzGeraldTZ2022} proved sharp
asymptotics for the Fredholm Pfaffians arising from
these kernels, rigorously confirming the gap
exponents predicted by Derrida and
Zeitak~\cite{DerridaZeitak1996}.

The spin-pair duality and the checkerboard approach
cover complementary territory. The spin-pair duality
treats mixed coalescence-annihilation (with
annihilation probability
$\theta \in [0,1]$)~\cite{GarrodPTZ2018}
and---as shown
in~\cite{TribeZaboronski2026}---all entrance laws,
but requires a time-homogeneous Markov generator
with the right algebraic structure. The present
paper treats pure coalescence with all deterministic
initial conditions, and works on discrete lattices
with arbitrary inhomogeneous transition probabilities,
where no generator or PDE is available. Neither approach
subsumes the other; they overlap for
time-homogeneous random walks and Brownian motion.
In this overlap, the two dualities produce the same
mathematical object: the ``dual particles'' of the
spin-pair duality are the backward particles of the
checkerboard construction
(\Cref{sec:gptz-connection}).

For time-homogeneous coalescing systems, the
Pfaffian point process structure itself is therefore
not new---it was established
in~\cite{TZ2011,GarrodPTZ2018}---and a CLT was
proved by Glinyanaya and
Fomichov~\cite{GlinyanayaFomichov2018} via spatial
mixing. The
contributions of the present paper are: the
extension to time-inhomogeneous and totally
asymmetric dynamics (\Cref{sec:examples}), where
no generator is available; the cumulant coloring
formula (\Cref{thm:coloring-formula}) and its
structural properties, which have no analogue in
the analytic approach; and the structural
explanation for the CLT via indecomposability
(\Cref{thm:clt})---every nonzero cumulant term
couples all wall positions, which forces local
correlations and implies $\kappa_k(N_L) = O(L)$
for all~$k \geq 2$.

\subsubsection{CLT for point processes}

Cumulants of determinantal point processes are
well understood.
Soshnikov~\cite{Soshnikov2002} proved
$\kappa_k = (-1)^{k-1} \Tr(K^k)$ for Hermitian
kernels~$K$, from which the CLT follows whenever the
variance diverges. For \emph{Pfaffian} point
processes, the corresponding theory is more recent.
Wang and Xu~\cite{WangXu2025} proved the trace
formula and a CLT under a finite-rank commutator
property for the operators derived from the
$2 \times 2$ matrix kernel, verified for the
Sine$_1$ and Sine$_4$ processes.
Lin, Qiu, and Wang~\cite{LinQiuWang2021} proved a
CLT for Pfaffian point processes whose $2 \times 2$
matrix kernel satisfies a symmetry condition
($J$-Hermiticity), without explicit cumulant
formulas.

\subsubsection{CLT for coalescing Brownian motions}

Glinyanaya and
Fomichov~\cite{GlinyanayaFomichov2018} proved a CLT
(with Berry--Esseen bound) for the number of
surviving coalescing Brownian motions in a bounded
interval, using spatial mixing (the
Ibragimov--Linnik CLT for weakly dependent
sequences~\cite{IbragimovLinnik1971}). The
coloring formula (\Cref{thm:coloring-formula})
provides a different route to the CLT, exploiting
the probabilistic interpretation of the kernel
entries as crossing-or-meeting probabilities of
independent particles. Indecomposability of the
cumulant expansion ensures convergence of the
integrated cumulant integrals, bypassing the
kernel-specific conditions required by the
approaches above.

\subsubsection{Companion papers}

This paper is part of a series. The coalescence
paper~\cite{Sniady2026coalescence} proves exact
determinantal formulas for coalescence probabilities
on arbitrary planar weighted directed acyclic
graphs. The annihilation
paper~\cite{Sniady2026annihilation} extends this to
annihilation and proves the Pfaffian pairwise
coalescence formula. A third
companion~\cite{Sniady2026coalescenceApplications}
studies the \emph{wall-particle system}---the joint
system of survivors and their
basin boundaries under the maximal entrance law.
The present paper connects the combinatorial
framework to Pfaffian point processes.

\subsection{Organization}
\label{sec:intro-organization}

The paper has two parts. The first
(\Cref{sec:walls-pfaffian,sec:cumulants,sec:clt,sec:empty-to-pp,sec:direct-examples})
develops the wall-level theory without any
lattice structure: the Pfaffian empty-interval
formula (\Cref{sec:walls-pfaffian}), the cumulant
coloring formula and its three structural properties
(\Cref{sec:cumulants}), the central limit theorem
for the wall count (\Cref{sec:clt}), correlation
functions from empty-interval probabilities
(\Cref{sec:empty-to-pp}), and the
Brownian motion example recovering the
Tribe--Zaboronski
kernel~\cite{TZ2011,GarrodPTZ2018}
(\Cref{sec:direct-examples}).
The second part
(\Cref{sec:checkerboard,sec:examples})
translates these results to surviving particles
via the checkerboard duality
(\Cref{sec:checkerboard}) and works out explicit
formulas for several discrete particle systems
(\Cref{sec:examples}).
\Cref{sec:appendix-clt} proves the underlying CLT
for sums of dependent indicators via idempotence,
and \Cref{sec:appendix-colorings} tabulates the
nonzero colorings for $k = 3$.

\section{Pfaffian formula for walls}
\label{sec:walls-pfaffian}

This section proves the Pfaffian empty-interval formula
(\Cref{thm:intro-pfaffian}) for the wall process of any
skip-free coalescing system. The proof uses only the
definition of walls and the cancellative labeling; no
duality or lattice structure is needed.

\subsection{Walls and coalescence}
\label{sec:walls-coalescence}

Consider a coalescing \emph{skip-free} process:
particles move on a linearly ordered space, cannot
change order without first
meeting~\cite{KM1959}, and coalesce when they meet.
On~$\ZZ$ this means nearest-neighbor transitions;
on~$\RR$ it means continuous paths (such as Brownian
motion). The transition probabilities may be
inhomogeneous in both space and time. Assume every
point is initially occupied (the maximal entrance
law). Fix a time~$T > 0$.

The surviving particles partition the line into
\emph{basins of attraction}: the basin of a
survivor~$y$ is the contiguous set of initial
positions whose particle has coalesced with~$y$ by
time~$T$. A \emph{wall} is a boundary between two
adjacent basins (\Cref{sec:intro-setting}).

\begin{proposition}[No wall equals coalescence]
\label{prop:no-wall-coalescence}
For initial positions $a < b$, there is no wall in
the interval~$(a, b)$ if and only if the particles
starting at~$a$ and~$b$ have coalesced by
time~$T$.
\end{proposition}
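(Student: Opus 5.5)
The plan is to derive the statement from two facts: each basin of attraction is an interval of initial positions, and coalescence is an equivalence relation that is stable in time. For every initial position $x$, let $\sigma(x)$ be the survivor at time~$T$ with which the particle started at~$x$ has merged. The basin of a survivor~$y$ is $\sigma^{-1}(y)$. We have two things to show: $\sigma$ is monotone, so basins are contiguous as the definition assumes; and walls are exactly the points that separate consecutive distinct fibers of~$\sigma$. Once these are in place, both directions are short.

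\textbf{Monotonicity.} Take $x < x'$ and suppose the particles from $x$ and $x'$ have not merged by time~$T$. Skip-freeness says particles cannot change order without first meeting. So the particle from $x$ stays strictly left of the particle from $x'$ up to time~$T$, and hence $\sigma(x) < \sigma(x')$. If instead they have merged, then $\sigma(x) = \sigma(x')$. In both cases $\sigma$ is non-decreasing. Every fiber is therefore an interval (of $\ZZ$, or of $\RR$), and the basins are ordered along the line in the same order as the survivors. A wall is then a point separating two consecutive fibers. Concretely, on $\ZZ$ it is a half-integer $x+\tfrac12$ with $\sigma(x) \ne \sigma(x+1)$. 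On $\RR$ it is a boundary point between two fibers.

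\textbf{The two directions.} Suppose the particles from $a$ and $b$ have coalesced, so $\sigma(a) = \sigma(b)$. By monotonicity, $\sigma$ is constant on $[a,b]$. Then $[a,b]$ lies inside a single basin, and no boundary between basins falls in $(a,b)$. Conversely, suppose $\sigma(a) \ne \sigma(b)$. Then $\sigma(a) < \sigma(b)$, and $[a,b]$ meets at least two distinct fibers, each of which is an interval. The boundary between the fiber containing $a$ and the next fiber lies in $[a,b]$, and it must lie in the open interval $(a,b)$. On $\ZZ$ this is immediate: there is some $x$ with $a \le x < x+1 \le b$ and $\sigma(x) \ne \sigma(x+1)$, and the wall $x+\tfrac12$ lies in $(a,b)$.

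\textbf{Main obstacle: the continuum case.} On $\RR$ we must rule out a wall sitting exactly at $a$ or $b$ when the fibers of $a$ and $b$ differ. Suppose the fiber of $a$ ended exactly at $a$. The next fiber starts at $a$ as an open endpoint, and it still contains a point strictly inside $(a,b)$, so its boundary with the following fibers produces walls in $(a,b)$ unless $b$ itself belongs to it. In the latter case the separating wall between the fibers of $a$ and $b$ sits at $a$ only if the fiber of $a$ is $\{a\}$ or ends at $a$. We handle this with the convention that walls are boundary points of the partition of $\RR$ into basins, and we take the open interval so that the endpoints, being initial positions, are assigned to their own basins. If the precise convention for walls on $\RR$ differs, for instance if walls are defined as jump points of $\sigma$, we use the Arratia coming-down-from-infinity fact that survivors, and hence walls, are locally finite. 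Then $\sigma$ is a step function, and the fibers of $a$ and $b$ differ exactly when some jump lies in $(a,b)$, modulo the null event of a jump at the deterministic points $a,b$. Apart from this endpoint bookkeeping, the argument uses only the skip-free order preservation.
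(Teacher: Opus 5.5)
Your proposal is correct and follows the paper's proof: skip-freeness (no change of order without meeting) forces every particle started in $[a,b]$ to join the cluster of $a$ and $b$ once those two have merged, which is exactly your monotonicity of $\sigma$, and the converse follows from contiguity of basins; on $\ZZ$, where walls are half-integers and $a,b$ are integers, the argument is complete. The continuum endpoint issue you raise is not addressed in the paper, and only your almost-sure fix resolves it: no labelling convention can remove a wall sitting exactly at~$a$ when $b$ lies in the next basin, whereas for Brownian motion the null event is justified by $\PP(\text{wall at } a) \le \PP(\text{particles from } a \pm \epsilon \text{ have not coalesced}) \to 0$ as $\epsilon \to 0$.
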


\begin{proof}
If the particles from~$a$ and~$b$ have coalesced,
they belong to the same survivor. Since the process
is skip-free, every particle starting between~$a$
and~$b$ must also have met them (it cannot escape
without first meeting one of its neighbors). All
points in~$[a, b]$ share the same basin, so no wall
exists in~$(a, b)$.

Conversely, if no wall exists in~$(a, b)$, then
all points in~$[a, b]$ belong to the same basin,
so the particles from~$a$ and~$b$ have coalesced.
\end{proof}

For $n$~pairs $a_1, b_1, \ldots, a_n, b_n$
with $a_1 < b_1 \leq a_2 < \cdots \leq a_n < b_n$,
the event $\{\text{no wall in any $(a_i, b_i)$}\}$
equals \emph{pairwise coalescence}: for each~$i$,
the particles from~$a_i$ and~$b_i$ have coalesced.

\subsection{Cancellative labeling (coalescence to
annihilation)}
\label{sec:cancellative-labeling}

Label each of the $2n$ particles (from positions
$a_1, b_1, \ldots, a_n, b_n$) with~$1$,
and all other particles with~$0$.
When two particles coalesce, let the merged
particle carry the sum of their labels $\bmod\; 2$.
A cluster of~$s$ coalesced particles therefore
carries label~$s \bmod 2$. Tracking only the clusters
with label~$1$ gives an \emph{annihilating} particle
system: when two label-$1$ clusters meet, both labels
become~$0$ ($1 \oplus 1 = 0$). \emph{Total
annihilation} means that no label-$1$ cluster
survives; equivalently, every cluster contains an
even number of labeled particles.
This mod-$2$ rule is the \emph{cancellative labeling}
of Griffeath~\cite{Griffeath1979} (see
also~\cite{AthreyaSwart2012,benAvrahamBrunet2005}).

\begin{proposition}[Pairwise coalescence equals
total annihilation]
\label{prop:parity-trick}
Pairwise coalescence of the $2n$ particles equals
total annihilation under the cancellative labeling.
\end{proposition}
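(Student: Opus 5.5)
The plan is to prove the two inclusions separately, working with the clusters present at time~$T$. Each cluster is a set of initial positions; by the skip-free property and \Cref{prop:no-wall-coalescence}, each cluster is a contiguous block of initial positions, namely a basin. Total annihilation means every basin contains an even number of the $2n$ labeled points $a_1 < b_1 \le a_2 < \cdots \le a_n < b_n$. Pairwise coalescence means $a_i$ and $b_i$ lie in the same basin for every~$i$. The proposition then becomes a purely combinatorial statement about partitioning the ordered list of endpoints into consecutive blocks.

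A preliminary point concerns coinciding endpoints $b_i = a_{i+1}$. Here two ``particles'' start at the same site, so they are the same particle. I would adopt the convention, implicit in the labeling, that each of the $2n$ endpoints contributes its own label~$1$ even when two endpoints coincide. A shared site then carries label $1 \oplus 1 = 0$ from the start, equivalently two labeled particles that coalesce at time~$0$. With this convention, a basin's label is the parity of the number of endpoint indices $j \in \{1,\dots,2n\}$ whose position lies in that basin, counted with multiplicity.

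For \emph{pairwise coalescence $\Rightarrow$ total annihilation}: each basin contains, for every $i$, either both $a_i$ and $b_i$ or neither. So its count of labeled endpoints is a sum of $2$'s and is even, and every cluster carries label~$0$.

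For the converse, suppose every basin contains an even number of endpoints. I would argue by induction along the ordered list $a_1, b_1, a_2, \ldots$. The basin containing $a_1$ is an interval; list its endpoints in increasing order, and they form an initial segment $a_1, b_1, \ldots$ of the list. Contiguity is what makes it an initial segment: any endpoint lying between two members of the basin is itself in the basin. Since the count is even, this segment ends at some $b_j$, so it contains $a_1$ and $b_1$ together. Removing the pairs $(a_1,b_1),\dots,(a_j,b_j)$ leaves the remaining endpoints in later basins, or in the same basin if $b_j = a_{j+1}$. The coinciding case needs care: in that case the shared point contributes two endpoint indices, and the segment may end at $b_j$ with $a_{j+1}$ in the same basin. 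Parity bookkeeping still works if I phrase the argument as follows. Scan the indices $1,\dots,2n$ in order and group them by basin; consecutive groups correspond to consecutive basins. Each group has even size, and each group starts at an odd index. Hence every pair $\{2i-1, 2i\}$ lies within a single group, which is exactly pairwise coalescence.

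The main obstacle is this converse direction, and specifically making precise that basins at time~$T$ are contiguous intervals of initial positions. That fact is the skip-free input, already used in \Cref{prop:no-wall-coalescence}. The second delicate point is the handling of coincident endpoints via multiplicity labels. Once both are in place, the parity scan is elementary.
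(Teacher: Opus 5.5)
Your proposal is correct and follows essentially the paper's argument. Both combine contiguity of clusters (the skip-free input) with a parity count along the ordered list of endpoints, counting coincident endpoints with multiplicity. The paper proves the converse by contrapositive: if $(a_i,b_i)$ is split, the clusters containing $a_1,\dots,a_i$ hold $2i-1$ labeled particles, so one of them carries label~$1$. You instead argue directly that even-sized consecutive groups must begin at odd indices, which is the same prefix-parity observation.
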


\begin{proof}
Since the process is skip-free, particles that have
met follow the same path thereafter, forming clusters
of colocated particles. Clusters consist of
consecutive original particles (a particle cannot
join a non-adjacent cluster without first meeting its
neighbor).

Under pairwise coalescence, each pair $(a_i, b_i)$
is in the same cluster. Since pair members are
adjacent in the ordering, each cluster is a union
of consecutive pairs and therefore contains an even
number of labeled particles. All labels
are therefore~$0$: total annihilation.

Conversely, if some pair $(a_i, b_i)$ is in
different clusters, the clusters containing
$a_1, b_1, \ldots, a_{i-1}, b_{i-1}, a_i$ hold
$2i - 1$ labeled particles in total. Since labels
add modulo~$2$, the total label of these clusters
is~$1$, so at least one cluster carries
label~$1$: not total annihilation.
\end{proof}

\subsection{Pfaffian empty-interval formula}
\label{sec:pfaffian-empty-interval}

\begin{theorem}[Pfaffian empty-interval formula]
\label{thm:pfaffian-empty-interval}
Consider a coalescing skip-free process with every
point initially occupied. Run the system for
time~$T > 0$, and let
$a_1 < b_1 \leq a_2 < \cdots
\leq a_n < b_n$ be given.
Then:
\[
\PP(\text{no wall in any
  $(a_i, b_i)$}) = \Pf(A),
\]
where $A$ is the $2n \times 2n$ antisymmetric matrix
whose entry $A_{kl}$ ($k < l$) is the probability
that independent particles from
the $k$-th and $l$-th endpoints cross or
meet.
\end{theorem}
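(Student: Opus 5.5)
The plan is to chain the two reductions already established with the Pfaffian annihilation formula of the companion paper~\cite{Sniady2026annihilation}, so that no new probabilistic computation is needed.

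\textbf{Step 1: from walls to coalescence.} By \Cref{prop:no-wall-coalescence}, the event $\{\text{no wall in any }(a_i,b_i)\}$ is the intersection over $i$ of the events that the particles from $a_i$ and $b_i$ have coalesced by time~$T$. This is pairwise coalescence of the $2n$ marked particles. When $b_i = a_{i+1}$, two endpoints coincide, so one particle carries two marks. I will treat it as two colocated (already coalesced) particles. This is consistent with the cancellative labeling, since a cluster with two marks has label $0$.

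\textbf{Step 2: from coalescence to annihilation.} By \Cref{prop:parity-trick}, pairwise coalescence equals total annihilation of the label-$1$ clusters. Here I have to check one point. The trajectories of the $2n$ marked particles in the coalescing system, until they meet, are just those of the skip-free process. The unmarked particles affect nothing, because coalescence with an unmarked particle does not change the path of the marked one. So the label-$1$ clusters form an instantly annihilating system driven by the underlying skip-free process, started from $a_1,b_1,\dots,a_n,b_n$. Coinciding endpoints annihilate at time $0$.

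\textbf{Step 3: the annihilation Pfaffian.} I then invoke the main theorem of~\cite{Sniady2026annihilation}. For order-preserving strong Markov (Karlin--McGregor) dynamics, the probability of total annihilation of $2n$ ordered particles is $\Pf(A)$. Here $A_{kl}$ is the probability that two \emph{independent} copies from the $k$-th and $l$-th positions cross or meet by time $T$. This is exactly the matrix in the statement. For coinciding endpoints $x_k=x_l$, the entry is $A_{kl}=1$ (they meet at time $0$). I would verify that the formula remains valid in this degenerate case, for instance by the row and column manipulation showing that $\Pf(A)$ then factorizes correctly. Alternatively, one can note that the companion result allows weakly ordered starting points.

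\textbf{Main obstacle.} The main obstacle is Step 3's hypotheses. Its content is the sign-reversing involution over matchings in the companion paper, which I take as given. What needs care here is confirming that the annihilating system obtained in Step 2 fits those assumptions. In particular, "meeting" must be interpreted consistently: on $\ZZ$ in discrete time, particles may swap across an edge or land on the same site, and the skip-free property must exclude swapping without meeting. The cross-or-meet event for independent particles must correspond to the coalescence event for interacting ones under the natural coupling. First, I will check that coupling directly for two particles: run independent copies until their first meeting, then glue. Then I will rely on the companion theorem for the general $2n$ case.
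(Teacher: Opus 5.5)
Your proposal is correct and follows the paper's proof exactly. The paper chains \Cref{prop:no-wall-coalescence} (no wall equals pairwise coalescence) and \Cref{prop:parity-trick} (pairwise coalescence equals total annihilation under the cancellative labeling), then applies the Pfaffian annihilation formula of~\cite{Sniady2026annihilation}; your handling of coincident endpoints via $A_{kl}=1$, relying on the companion result permitting coincident starting positions, matches \Cref{rem:touching-intervals}.
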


\begin{proof}
By \Cref{prop:no-wall-coalescence}, the event
equals pairwise coalescence of the $2n$
particles. By \Cref{prop:parity-trick}, pairwise
coalescence equals total annihilation. The
Pfaffian annihilation
formula~\cite{Sniady2026annihilation} gives the
probability as~$\Pf(A)$.
\end{proof}

\begin{remark}[Touching intervals]
\label{rem:touching-intervals}
When $b_i = a_{i+1}$, two particles start at the
same position. The Pfaffian annihilation
formula~\cite{Sniady2026annihilation} allows
coincident starting positions, so the proof applies
directly: the entry $A_{kl} = 1$ for each
coincident pair (paths from the same position
trivially cross or meet).
\end{remark}

\section{The cumulant coloring formula}
\label{sec:cumulants}

The Pfaffian empty-interval formula
(\Cref{thm:intro-pfaffian}) computes joint
void probabilities for the wall process. This section
extracts the \emph{cumulants}---the connected part of
the $k$-point moment---via a concrete recipe.

To compute the $k$-th mixed cumulant of the wall
indicators, run $2k$ independent particles, two per
interval: one from each endpoint. Call these the
\emph{flanking particles} of the wall. No
coalescence, no interaction between different walls.
Assign \emph{color}~$w$ to both flanking particles
of wall~$w$. At time~$T$, read their colors from
left to right: this random sequence is the
\emph{coloring}~$I$.
The cumulant is a weighted average of a universal
integer coefficient~$c(I)$ over all coloring
outcomes (\Cref{thm:coloring-formula}), but with
one constraint: within each wall's pair, the two
flanking particles must not have crossed
during~$[0, T]$. If they did, the integrand
vanishes. Absorbing this non-crossing
condition into a conditional measure gives a clean
factorization: the cumulant equals a product of
non-crossing probabilities (one per wall) times a
conditional expectation of~$c(I)$
(\Cref{sec:non-colliding-conditioning}).

Three selection rules further reduce the
contributing colorings: the coefficients vanish
unless the coloring is interleaving
(\Cref{thm:proper-coloring}) and indecomposable
(\Cref{thm:indecomposability}), and boundary
colorings also vanish
(\Cref{thm:boundary-vanishing}).
Indecomposability controls the growth of cumulants
with the interval length, leading to the central
limit theorem (\Cref{thm:clt}). The other two
properties further reduce the nonzero terms but
are not needed for the CLT.

\subsection{The coloring formula}
\label{sec:coloring-formula}

\subsubsection{Setup and statement}

Fix $k$ disjoint intervals
$(a_1, b_1), \ldots, (a_k, b_k)$ with
$a_1 < b_1 \leq a_2 < \cdots \leq a_k < b_k$.
On~$\ZZ$, the endpoints $a_w, b_w$ are integers
(so that flanking particles exist at both
endpoints).
Let $\eta_w$ denote the indicator that at least one
wall lies in~$(a_w, b_w)$.

Start $2k$ independent copies of the underlying
process, two per interval: one from~$a_w$ and one
from~$b_w$, each carrying color~$w$. Run them to
time~$T$. Reading the colors of the $2k$ final
particles from left to right (in the discrete
case, breaking ties uniformly at random) gives the
\emph{coloring}~$I$---a sequence of length~$2k$
over the alphabet $\{1, \ldots, k\}$, each letter
appearing exactly twice.

\begin{theorem}[Coloring formula]
\label{thm:coloring-formula}
For $k \geq 2$,
\begin{equation}\label{eq:coloring-formula}
\kappa(\eta_1, \ldots, \eta_k)
= \EE\Bigl[c(I) \cdot \prod_{w=1}^{k}
  \mathbf{1}_{\textup{wall } w
  \textup{ non-crossing}}\Bigr],
\end{equation}
where $c(I)$ is a universal integer depending only on
the coloring (defined below), ``non-crossing'' means
that wall~$w$'s two flanking particles did not cross
during~$[0, T]$, and the expectation is over
the random trajectories and, in the discrete case,
the random tie-breaking.
Here $I$ is the random coloring realized by the
$2k$ particles; $c(I)$ is a deterministic function
of the coloring pattern.
\end{theorem}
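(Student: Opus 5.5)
The plan is to compute the cumulant of the complementary indicators $\xi_w = 1-\eta_w$ (no wall in $(a_w,b_w)$), whose joint moments are given by \Cref{thm:pfaffian-empty-interval}. Then I push everything onto a single probability space carrying $2k$ independent particles.

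\textbf{Step 1: reduce to $\xi_w$ and write the cumulant as a signed sum.} Cumulants of order $k\ge 2$ are unchanged by constant shifts and multilinear, so $\kappa(\eta_1,\ldots,\eta_k)=(-1)^k\kappa(\xi_1,\ldots,\xi_k)$. For $S\subseteq\{1,\ldots,k\}$, \Cref{thm:pfaffian-empty-interval} applied to the subfamily of intervals indexed by $S$ gives $\EE\prod_{w\in S}\xi_w=\Pf(A_S)$, where $A_S$ is the principal submatrix on the $2|S|$ endpoints of those intervals. The moment--cumulant (Möbius) inversion over set partitions $\pi$ of $\{1,\ldots,k\}$ then yields
\[
\kappa(\xi_1,\ldots,\xi_k)=\sum_{\pi}(-1)^{|\pi|-1}(|\pi|-1)!\prod_{B\in\pi}\Pf(A_B).
\]
Expanding each Pfaffian over perfect matchings turns every term into a sign times a product of entries $A_{kl}$ over disjoint pairs.

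\textbf{Step 2: realize all products on one space.} Run the $2k$ independent particles of the theorem, with the particle from each endpoint colored by its interval. Because the pairs of a matching are disjoint and the particles are independent, a product $\prod_{\{k,l\}\in M}A_{kl}$ equals $\EE\prod_{\{k,l\}\in M}\mathbf 1[\text{particles }k,l\text{ cross or meet}]$. I would then convert each indicator into a function of the tie-broken final order, by a Karlin--McGregor style switching after the first meeting time (strong Markov property). A pair that crosses or meets is, in law, one whose final relative order is reversed, once ties are accounted for by the uniform tie-breaking. This step requires care in the discrete case to make the tie convention consistent. The outcome is $\kappa=\EE[g(I,\omega)]$, where $g$ is an integer combination of products of order indicators.

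\textbf{Step 3, the main obstacle: extracting $\prod_w \mathbf 1_{\text{wall }w\text{ non-crossing}}$.} Write $1=N_w+(1-N_w)$, where $N_w$ is the non-crossing indicator of wall~$w$. On the event that wall~$w$'s two flanking particles met, swap their trajectories after the first meeting. This map is measure-preserving by the strong Markov property, and it leaves the coloring $I$ invariant because the two particles share color~$w$. On this event the integrand should therefore behave exactly as if $\xi_w\equiv 1$, i.e.\ as if the within-pair entry $A_{a_wb_w}$ were $1$ (compare \Cref{rem:touching-intervals}). A cumulant of order $\ge 2$ with a constant argument vanishes, so every term carrying a factor $(1-N_w)$ should cancel. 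This is the step I expect to be delicate: the switching must be shown to commute with the whole partition/matching expansion simultaneously, not just with one Pfaffian. I would first verify it for $k=2$ by hand. Once it holds, only the terms on $\bigcap_w\{N_w=1\}$ survive, and on that event the integrand depends only on the ordering pattern. This defines the universal integer
\[
c(I)=(-1)^k\sum_{\pi}(-1)^{|\pi|-1}(|\pi|-1)!\prod_{B\in\pi}\sum_{M}\sgn(M)\prod_{\{k,l\}\in M}\mathbf 1[k,l\text{ reversed in }I],
\]
where within-wall pairs count as not reversed. This gives \eqref{eq:coloring-formula}.
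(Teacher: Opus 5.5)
There are two genuine problems. \emph{First}, Step~2 is wrong as stated: crossing-or-meeting is not equal in law to reversal. The segment swap gives $A_{ij}=2\,\PP(\text{$i$ ends right of $j$})$ for $i<j$. In the discrete case $A_{ij}=2\PP(Z_i>Z_j)+\PP(Z_i=Z_j)$, which is twice the tie-broken reversal probability. So every cross-wall entry must be represented by $2\cdot\mathbf{1}[\text{reversed}]$. On the non-crossing event only monomials with $k$ cross-wall edges survive, so your $c(I)$ is $2^{-k}$ times the correct coefficient. For $k=2$, $\Cov(\xi_1,\xi_2)=-A_{13}A_{24}+A_{14}A_{23}$ gives $c(2,1,2,1)=-4$, while your formula gives $-1$.

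\emph{Second}, Step~3 is the whole content of the theorem, and you only assert it (``should behave'', ``should cancel''). The heuristic ``as if $\xi_w\equiv1$'' is not yet an argument. You must show that, after averaging over the switch, the integrand becomes \emph{pointwise} the moment--cumulant sum of a family in which $\xi_w$ is constant.

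The missing idea is an adjacent-label sign flip. Let $X(\omega)$ be antisymmetric with $X_{ij}=2\cdot\mathbf{1}[\text{$i$ ends right of $j$}]$ for $i<j$ in different walls, and $X_{2w-1,2w}=1-N_w$. Then $G=\sum_\pi\mu(\pi)\prod_{B\in\pi}\Pf(X_B)$ satisfies $\EE G=\kappa(\xi_1,\ldots,\xi_k)$, because each matching monomial is a product over disjoint independent pairs. The post-meeting switch $\tau_w$ exchanges rows and columns $2w-1,2w$ of $X$, except for the invariant entry $X_{2w-1,2w}$. Since these labels are adjacent, relabeling flips $\sgn(M)$ for every matching avoiding the wall edge without changing the orientation of any other edge. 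Matchings containing the wall edge contribute $X_{2w-1,2w}\Pf(X_{B\setminus\{w\}})$. Hence
\[
\tfrac12\bigl[\Pf(X_B)+\Pf(X_B)\circ\tau_w\bigr]=X_{2w-1,2w}\,\Pf(X_{B\setminus\{w\}}),
\]
which equals $\Pf(X_{B\setminus\{w\}})$ on $\{N_w=0\}$. The averaged integrand is therefore the M\"obius sum of a moment function with $m(B)=m(B\setminus\{w\})$. This sum vanishes for $k\ge2$: pair the partition in which $w$ is a singleton with those in which $w$ joins one of the $r$ other blocks, giving $(-1)^r r!+r(-1)^{r-1}(r-1)!=0$.

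With these two repairs your route is valid and genuinely different from the paper's. The paper first uses M\"obius inversion to keep only connected matchings, none of which contains a wall edge. It then writes each $A_{ij}$ as twice a reversal probability on the ordered simplex and groups the $2^k$ lifts of each coloring into $\prod_w\det_w$; that grouping implicitly uses the same sign flip. Your version avoids the connected-matching reduction. The paper's yields the explicit lift formula for $c(I)$, which by the M\"obius identity agrees with your corrected coefficient.
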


\subsubsection{The coloring coefficient}

Define the \emph{wall edges}
$(1, 2), (3, 4), \ldots, (2k{-}1, 2k)$, one per wall.
A perfect matching~$M$ of $\{1, \ldots, 2k\}$ is
\emph{connected} if the union of wall edges and
the edges of $M$ forms a single Hamiltonian cycle.
For $k \geq 2$, no connected matching contains a
wall edge
$(2w{-}1, 2w)$: it would form a $2$-cycle with
the wall edge, breaking the Hamiltonian cycle.
In particular, every matching edge connects
vertices from different walls.

The sequence $(1, 1, 2, 2, \ldots, k, k)$ records
the colors of the $2k$ interval endpoints in their
initial order (wall~$1$'s two endpoints first, then
wall~$2$'s, etc.). The symmetric group~$S_{2k}$
acts on sequences by permuting coordinates. A
\emph{lift of a coloring~$I$} is a
permutation $\sigma \in S_{2k}$ satisfying
$\sigma \cdot (1, 1, 2, 2, \ldots, k, k) = I$.
Each coloring has exactly~$2^k$
lifts (one swap per wall).
Write $\alpha_w < \beta_w$ for the two positions
of color~$w$ in~$I$; then
$\{\sigma(2w{-}1), \sigma(2w)\}
= \{\alpha_w, \beta_w\}$.

A lift~$\sigma$ is \emph{compatible} with a
connected matching~$M$ if $\sigma(i) > \sigma(j)$ for
every edge $\{i,j\}\in M$ with $i < j$.

Two signs enter the formula. The \emph{matching
sign} $\sgn(M)$ is the sign of the Pfaffian
expansion: writing the edges of~$M$ as
$(i_1, j_1), \ldots, (i_k, j_k)$ with $i_r < j_r$
and $i_1 < \cdots < i_k$,
\[
\sgn(M) = \sgn(i_1, j_1, \ldots, i_k, j_k)
\]
as a permutation of $\{1, \ldots, 2k\}$. The
\emph{lift sign}
\begin{equation}\label{eq:epsilon}
\epsilon(\sigma)
= \prod_{w=1}^{k}
  (-1)^{[\sigma(2w-1) > \sigma(2w)]}
\end{equation}
records whether each wall's left flanking particle
lands at the lower or upper rank.

\begin{definition}[Coloring coefficient]
For a coloring~$I$ of length~$2k$, the
\emph{coloring coefficient} is
\begin{equation}\label{eq:coloring-coefficient}
c(I) = (-1)^k \sum_{M \textup{ connected}}
  \sgn(M)
  \sum_{\substack{\sigma \textup{ lift of } I \\
                  \textup{compat.\ with } M}}
  \epsilon(\sigma).
\end{equation}
\end{definition}

\subsubsection{The assumption $k \geq 2$}
The derivation converts wall indicators
$\eta_w = \mathbf{1}_{\text{wall}}$ to
no-wall indicators
$\xi_w = 1 - \eta_w = \mathbf{1}_{\text{no wall}}$
via
$\kappa_k(\eta_1, \ldots, \eta_k)
= (-1)^k \kappa_k(\xi_1, \ldots, \xi_k)$.
This uses shift-invariance of mixed cumulants,
which holds for $k \geq 2$ but fails for $k = 1$:
$\kappa_1(\eta) = 1 - \kappa_1(\xi)
\neq -\kappa_1(\xi)$.
The first cumulant $\kappa_1(\eta) = P(\text{wall
in } (a, b))$ is computed directly from the
Pfaffian for a single interval.

\subsection{Small examples}
\label{sec:small-examples}

\subsubsection{Covariance}
\label{sec:covariance-coloring}
For $k = 2$, the only nonzero coloring is
$I = (2, 1, 2, 1)$, with $c(I) = -4$.
This coloring means the four particles interleave:
a wall-$2$ particle finishes leftmost, then a
wall-$1$ particle, then wall-$2$, then wall-$1$.
Combined with the non-crossing condition for each
wall, the formula gives
\[
\Cov(\eta_1, \eta_2)
= -4 \cdot P\!\left(\!
  \begin{array}{c}
  Y_{a_2} < Y_{a_1} < Y_{b_2} < Y_{b_1} \\
  \text{and both walls non-crossing}
  \end{array}\!\right)\!,
\]
where $Y_s$ denotes the time-$T$ position of an
independent particle started at~$s$, and
``non-crossing'' means the two flanking particles of
each wall never swapped order during~$[0, T]$.
The negative sign reflects repulsion between walls.

\begin{figure}[t]
\centering
\coloringdiagramTwo{2}{1}{2}{1}{-4}
\caption{The unique nonzero coloring for $k = 2$:
  $I = (2, 1, 2, 1)$, $c(I) = -4$.
  Positions $1$ and~$3$ are assigned to wall~$2$
  (dashed), positions $2$ and~$4$ to wall~$1$
  (solid), giving the interleaving pattern.}
\label{fig:k2-coloring}
\end{figure}

\subsubsection{Other small examples}
For $k = 3$, there are $90$ colorings (sequences of
length~$6$ over $\{1, 2, 3\}$ with each letter
twice), of which $14$ have $c(I) \neq 0$.
All nonzero values satisfy $|c(I)| = 8$; the
complete list appears in
\Cref{sec:appendix-colorings}.
Only one coloring has $c < 0$; the rest are positive.
For $k = 4$, there are $386$
nonzero colorings (out of $2520$ total),
with $|c(I)| \in \{16, 32, 48\}$
(all multiples of~$2^k$).

\begin{remark}[Sign of cumulants]
For $k = 2$, the unique nonzero coloring has
$c(I) = -4 < 0$, so the covariance is manifestly
negative (repulsion) for every skip-free process.
For $k \geq 3$, both positive and negative
coefficients appear
(\Cref{sec:appendix-colorings}).
Whether the total cumulant $\kappa_k$ has a
definite sign for all skip-free processes remains
an open question.
\end{remark}

\subsection{Derivation}
\label{sec:cumulants-derivation}

\subsubsection{From Pfaffian to connected matchings}

Assume first that the transition law has a
continuous density~$p(s, y)$ (no atoms), so that
final positions are almost surely distinct; the
discrete case is treated at the end of the
derivation.

Since
$\eta_w = 1 - \mathbf{1}_{\text{no wall in
$(a_w, b_w)$}}$ and cumulants of order $k \geq 2$
are invariant under additive constants, the mixed
cumulants of the wall indicators equal $(-1)^k$
times those of the void indicators.
The void probabilities are Pfaffians
(\Cref{thm:intro-pfaffian}), and
the cumulant is the connected part of the joint
moment~\cite{Speed1983}. For Pfaffians, M\"obius
inversion on the partition lattice selects
precisely the \emph{connected} matchings from the
Pfaffian expansion~\cite{WangXu2025}.

By the Karlin--McGregor segment
swap~\cite{KM1959}, each crossing-or-meeting
probability satisfies
\[
A_{ij} = 2 \int_{z_i > z_j}
  p(s_i, z_i)\, p(s_j, z_j)\, dz_i\, dz_j,
\]
where $s_i, s_j$ are the starting positions:
paths that crossed and ended unswapped biject
with paths that crossed and ended swapped.
Expanding all $k$ matching edges simultaneously,
each connected matching~$M$ produces a
$2k$-dimensional integral over
$z_1, \ldots, z_{2k}$ with the constraint
$z_i > z_j$ for every matching edge
$\{i, j\} \in M$ with $i < j$.

Rewriting on the ordered simplex
$y_1 < y_2 < \cdots < y_{2k}$ via a permutation
$\sigma \in S_{2k}$ (particle~$i$ lands at
position~$y_{\sigma(i)}$), the constraint becomes
$\sigma(i) > \sigma(j)$.
The contribution of each compatible~$\sigma$ is
the simplex integral
\begin{equation}\label{eq:simplex-integral}
J(\sigma)
= \int_{y_1 < \cdots < y_{2k}}
  \prod_{i=1}^{2k} p(s_i, y_{\sigma(i)})\, dy,
\end{equation}
where $s_i$ is the starting position of particle~$i$
($s_{2w-1} = a_w$, $s_{2w} = b_w$) and
$p(s, y)$ the transition density.

\subsubsection{From simplex to coloring integral}

Wall~$w$'s two particles ($2w{-}1$ and $2w$,
starting at~$a_w$ and~$b_w$) contribute the factor
$p(a_w, y_{\sigma(2w-1)}) \cdot
p(b_w, y_{\sigma(2w)})$
to~\eqref{eq:simplex-integral}.
Since $\{\sigma(2w{-}1), \sigma(2w)\}
= \{\alpha_w, \beta_w\}$, each lift~$\sigma$
assigns one of the two flanking particles to
each position. Summing over the two lifts that
differ by swapping within wall~$w$ produces the
\emph{Karlin--McGregor determinant}
\[
\det\nolimits_w(u, v)
\coloneqq
\begin{vmatrix}
p(a_w, u) & p(a_w, v) \\
p(b_w, u) & p(b_w, v)
\end{vmatrix}.
\]
Doing this for all $k$ walls simultaneously, the
product over walls absorbs all lift
signs~$\epsilon(\sigma)$
from~\eqref{eq:epsilon}.
By the Karlin--McGregor theorem~\cite{KM1959},
$\det_w(u,v)\, du\, dv$ is the probability that
two independent particles from~$a_w$ and~$b_w$
end at~$(u,v)$ without crossing; in particular,
$\det_w(u,v) \geq 0$ for $u < v$.
Summing over all connected matchings and compatible
lifts gives the integral form of the
coloring formula:
\begin{equation}\label{eq:coloring-integral}
\kappa(\eta_1, \ldots, \eta_k)
= \sum_I c(I) \int_{y_1 < \cdots < y_{2k}}
  \prod_{w=1}^{k}
  \det\nolimits_w(y_{\alpha_w}, y_{\beta_w})\, dy.
\end{equation}
This is equivalent to
\Cref{thm:coloring-formula}: each
$\det_w(y_{\alpha_w}, y_{\beta_w})\, dy$ is the
probability that wall~$w$'s flanking particles end
at $(y_{\alpha_w}, y_{\beta_w})$ without crossing,
and integrating over the simplex sums exactly
those outcomes where the coloring equals~$I$.

\subsubsection{Discrete tie-breaking}

In the discrete setting, ties are broken by
attaching a private copy of~$\RR$ to each
final vertex~$n$: a particle at discrete position~$n$
receives an independent uniform continuous
coordinate on a small interval confined
to~$n$'s copy (from which it cannot reach
another vertex's interval). This does not alter
the wall system: particles sharing a copy of~$\RR$
have already coalesced in the discrete dynamics,
so they belong to the same basin regardless of
the continuous perturbation.
The lexicographic
order breaks all ties almost surely, so the
strict simplex formulation applies unchanged.
Since the transition probabilities depend only
on the discrete coordinate, integrating out
the continuous tie-breakers recovers
the $1/\prod_j m_j!$ multiplicity weights on
the weak simplex
$y_1 \leq y_2 \leq \cdots \leq y_{2k}$.
When both flanking particles of a wall land at
the same vertex, the $\det_w$ factor vanishes
(the determinant has identical columns), so only
configurations with distinct within-wall
positions contribute.

\subsection{Non-crossing conditioning}
\label{sec:non-colliding-conditioning}

In the coloring formula~\eqref{eq:coloring-formula},
the $k$ pairs of particles evolve independently,
and the non-crossing indicators can be absorbed
into a conditional measure: condition each pair
on its two particles not crossing during~$[0,T]$.
This gives
\[
\kappa(\eta_1, \ldots, \eta_k)
= \prod_{w=1}^k P_w(\text{no crossing})
  \cdot \EE_{\mathrm{cond}}\bigl[c(I)\bigr],
\]
where $P_w(\text{no crossing})$ is the probability
that wall~$w$'s flanking particles maintain their
order, and the expectation is over $k$
independently conditioned non-crossing pairs.

As the flanking distance
$\varepsilon = b_w - a_w \to 0$, the prefactor
$P_w(\text{no crossing}) = O(\varepsilon)$
captures all the vanishing, while the conditional
density~$p_{\mathrm{cond},w}$ of wall~$w$'s pair
given non-crossing converges to the density
of a non-colliding Brownian pair---the Doob
$h$-transform with $h(a, b) = b - a$, which
conditions the pair to maintain strict order
throughout~$[0,T]$. The bridge density of this
conditioned process involves
$\varphi(y-x)$ and
$(y - x)\varphi(y - x)$, which are the row
functions of the $2 \times 2$
kernel~(\Cref{sec:pfaffian}).
The factorization also explains the Fubini step
in the CLT proof (\Cref{sec:clt-proof-discrete}):
integrating $\det_w$ over the wall position
computes $P_w(\text{no crossing})$ marginalized
over the wall, which has Gaussian-type decay in
the gap $y_{\beta_w} - y_{\alpha_w}$.

\subsection{Interleaving colorings}
\label{sec:proper-coloring}

A coloring~$I$ is \emph{interleaving} if no two
adjacent entries are equal.

\begin{theorem}[Only interleaving colorings may contribute]
\label{thm:proper-coloring}
For $k \geq 2$, if~$I$ is not interleaving, then
$c(I) = 0$.
\end{theorem}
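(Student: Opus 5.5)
The plan is to show that, for each fixed connected matching~$M$, the inner sum over compatible lifts in~\eqref{eq:coloring-coefficient} already vanishes. I would do this with a sign-reversing involution on those lifts. Suppose $I$ is not interleaving. Then some color~$w$ occupies two adjacent positions, i.e.\ $\alpha_w = p$ and $\beta_w = p+1$ for some~$p$. Let $\tau \in S_{2k}$ be the transposition of the positions $p$ and~$p+1$. Since $\tau$ fixes~$I$, the map $\sigma \mapsto \tau\circ\sigma$ sends lifts of~$I$ to lifts of~$I$. Concretely, it exchanges which of wall~$w$'s two flanking particles $2w{-}1, 2w$ is sent to~$p$ and which to~$p+1$, and leaves every other particle's position unchanged. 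It is an involution without fixed points on the $2^k$ lifts.

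The first step is to check that this involution flips the lift sign. In~\eqref{eq:epsilon}, only the factor for wall~$w$ changes: the indicator $[\sigma(2w{-}1) > \sigma(2w)]$ toggles, because the pair $\{\sigma(2w{-}1),\sigma(2w)\}=\{p,p+1\}$ is reversed. The other factors are untouched, so $\epsilon(\tau\circ\sigma) = -\epsilon(\sigma)$.

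The second step, which is the only place where the hypotheses matter, is to check that compatibility with a connected matching~$M$ is preserved. Here we use $k\geq 2$: as noted after the definition of connected matchings, $M$ contains no wall edge, so every edge of~$M$ joins particles from different walls. Take an edge $\{i,j\}\in M$.
\begin{itemize}
\item If neither endpoint belongs to wall~$w$, then $\sigma$ and $\tau\circ\sigma$ agree on both endpoints, and the constraint is unchanged.
\item If exactly one endpoint, say $i\in\{2w{-}1,2w\}$, belongs to wall~$w$, then $\sigma(i)\in\{p,p+1\}$ while $\sigma(j)\notin\{p,p+1\}$. Exchanging $p$ and~$p+1$ does not change whether $\sigma(i)$ lies above or below $\sigma(j)$, because no value lies strictly between $p$ and $p+1$.
\end{itemize}
Hence $\sigma$ is compatible with~$M$ if and only if $\tau\circ\sigma$ is.

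Combining the two steps, for each connected~$M$ the set of compatible lifts splits into pairs $\{\sigma,\tau\circ\sigma\}$ with opposite values of~$\epsilon$, so the inner sum is zero. Summing over~$M$ with the weights $\sgn(M)$ then gives $c(I)=0$. The only delicate point I expect is the compatibility check, and specifically the need to rule out a matching edge between the two wall-$w$ particles themselves. Such an edge would make the constraint flip under~$\tau$ and would break the argument. This is exactly why the statement requires $k\ge2$, which guarantees that connected matchings avoid wall edges.
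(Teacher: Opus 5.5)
Your proposal is correct and follows the paper's proof essentially verbatim. Your map $\sigma \mapsto \tau\circ\sigma$ is exactly the paper's within-wall swap $\tau_w$ (since $\{\sigma(2w{-}1),\sigma(2w)\}=\{p,p+1\}$), and both checks match the paper's: the sign flip of $\epsilon$, and preservation of compatibility because matching edges join different walls when $k\geq 2$ and no position lies strictly between $p$ and $p+1$.
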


\begin{proof}
We show that for each connected matching~$M$, the inner sum
\[
\sum_{\substack{\sigma \textup{ lift of } I \\
                \textup{compat.\ with } M}}
\epsilon(\sigma)
= 0.
\]
Define an involution~$\tau_w$ on lifts
by swapping within wall~$w$: set
$\tau_w(\sigma)(2w{-}1) = \sigma(2w)$ and
$\tau_w(\sigma)(2w) = \sigma(2w{-}1)$, leaving all other
values unchanged.

Since~$I$ is not interleaving, some adjacent positions
carry the same color: $I_r = I_{r+1} = w$ for some~$r$.
Wall~$w$'s two simplex positions are therefore
$\{\alpha_w, \beta_w\} = \{r, r+1\}$, and $\tau_w$
exchanges positions~$r$ and~$r+1$.

\medskip

\emph{Sign flip.}
$\epsilon(\tau_w(\sigma))
= -\epsilon(\sigma)$,
since only the wall-$w$ factor changes.

\medskip

\emph{Compatibility preserved.}
Since matching edges connect different walls,
every matching edge incident to~$2w{-}1$ or~$2w$
connects to a particle~$a$ from a different wall,
and $\sigma(a) \notin \{r, r+1\}$. Since positions~$r$
and~$r+1$ are adjacent integers:
\[
\sigma(a) > r
\iff \sigma(a) \geq r+1
\iff \sigma(a) > r+1.
\]
The second equivalence uses $\sigma(a) \neq r+1$.
Therefore swapping between~$r$ and~$r+1$ does not affect
any constraint.

\medskip

The involution~$\tau_w$ pairs the compatible
lifts with opposite signs, so the inner
sum vanishes for each~$M$, giving $c(I) = 0$.
\end{proof}

\subsection{Boundary vanishing}
\label{sec:boundary-vanishing}

\begin{theorem}[Boundary vanishing]
\label{thm:boundary-vanishing}
For $k \geq 2$, $c(I) = 0$ if wall~$1$ occupies
position~$1$, or if wall~$k$ occupies position~$2k$.
\end{theorem}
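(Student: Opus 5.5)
The plan is to show something stronger than cancellation: when wall~$1$ occupies position~$1$, or wall~$k$ occupies position~$2k$, then for every connected matching~$M$ the set of lifts of~$I$ compatible with~$M$ is \emph{empty}. In that case every inner sum in~\eqref{eq:coloring-coefficient} is an empty sum, so $c(I)=0$ without any involution argument. The only structural input is the observation made right after the definition of connected matchings: for $k\geq 2$ no connected matching contains a wall edge $(2w{-}1,2w)$. Hence particles $1$ and~$2$ are each matched to some index $j\geq 3$, and particles $2k{-}1$ and $2k$ are each matched to some index $i\leq 2k{-}2$.

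First case: wall~$1$ occupies position~$1$, i.e.\ $\alpha_1=1$. Then every lift~$\sigma$ has $\{\sigma(1),\sigma(2)\}=\{1,\beta_1\}$.
\begin{itemize}
\item If $\sigma(1)=1$, let $\{1,j\}\in M$ with $j\geq 3$. Compatibility would require $\sigma(1)>\sigma(j)$, that is $1>\sigma(j)$, which is impossible.
\item If instead $\sigma(2)=1$, let $\{2,j\}\in M$ with $j\geq 3>2$. Compatibility would require $1=\sigma(2)>\sigma(j)$, again impossible.
\end{itemize}
So no lift of~$I$ is compatible with~$M$.

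Second case: wall~$k$ occupies position~$2k$, i.e.\ $\beta_k=2k$. This is the mirror argument. Either $\sigma(2k)=2k$ or $\sigma(2k{-}1)=2k$.
\begin{itemize}
\item If $\sigma(2k)=2k$, take the edge $\{i,2k\}$ with $i<2k$. Compatibility would require $\sigma(i)>2k$, which is impossible.
\item If $\sigma(2k{-}1)=2k$, take the edge $\{i,2k{-}1\}$. Its partner satisfies $i\leq 2k{-}2<2k{-}1$ because the wall edge is excluded, so compatibility would again require $\sigma(i)>2k$, impossible.
\end{itemize}

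I do not expect a genuine obstacle; the argument is a direct check against the compatibility condition. The one point needing care is the exclusion of the wall edge from connected matchings. Without it, particle~$2$ could be matched to particle~$1$, with the larger index $j$ being~$2$ itself, and the inequality would then point the other way. This is exactly where the hypothesis $k\geq 2$ enters.

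Probabilistically, the statement says that the leftmost endpoint's wall cannot contribute when it ends up at the extreme left rank. This is consistent with the interleaving selection rule (\Cref{thm:proper-coloring}), but it is proved independently of it.
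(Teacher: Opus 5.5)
Your proof is correct and is essentially the paper's argument. In both, the exclusion of wall edges from connected matchings forces the particle at the extreme rank to have its partner on the wrong side, so no lift is compatible with any connected matching and every inner sum is empty. You also spell out the mirror case, noting that the partner of $2k{-}1$ is at most $2k{-}2$, which the paper only calls ``symmetric.''
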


\begin{proof}
Suppose wall~$1$ occupies position~$1$, so either
$\sigma(1) = 1$ or $\sigma(2) = 1$ for every
lift~$\sigma$ of~$I$.

In any connected matching~$M$, vertex~$1$ has a matching
edge to some vertex~$v$ from a different wall, so
$v \geq 3$. Since $1 < v$, compatibility requires
$\sigma(1) > \sigma(v)$. But $\sigma(1) = 1$ is the
minimum of all positions, making this impossible.
If instead $\sigma(2) = 1$, the matching edge from
vertex~$2$ goes to some vertex~$v' \geq 3$
(matching edges connect different walls), with
$2 < v'$ and $\sigma(2) > \sigma(v')$ requiring
$1 > \sigma(v')$---again impossible.

\medskip

The argument for wall~$k$ at position~$2k$ is
symmetric: a particle assigned to position~$2k$
(the maximum) cannot satisfy $\sigma(v) > 2k$
for any matching edge from a different wall.
\end{proof}

\subsection{Indecomposable colorings}
\label{sec:indecomposability}

A coloring~$I = (I_1, \ldots, I_{2k})$ is
\emph{indecomposable} if there is no $0 < g < k$
such that $\{I_1, \ldots, I_{2g}\}
= \{1, \ldots, g\}$ (each appearing twice);
otherwise it is \emph{decomposable}.

\begin{theorem}[Indecomposability]
\label{thm:indecomposability}
If~$I$ is decomposable, then $c(I) = 0$.
\end{theorem}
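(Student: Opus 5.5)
The plan is to show that when $I$ is decomposable, no lift of $I$ is compatible with any connected matching. Then every inner sum in \eqref{eq:coloring-coefficient} is an empty sum, so $c(I)=0$. Unlike \Cref{thm:proper-coloring}, no sign-reversing involution is needed; the vanishing holds term by term, for the same reason as in \Cref{thm:boundary-vanishing}.

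\emph{Step 1: lifts respect the block decomposition.} Suppose $0<g<k$ and $\{I_1,\ldots,I_{2g}\}=\{1,\ldots,g\}$, each appearing twice. For $w\le g$, both positions $\alpha_w,\beta_w$ of color $w$ lie in $\{1,\ldots,2g\}$. For $w>g$, both positions lie in $\{2g+1,\ldots,2k\}$. Any lift $\sigma$ satisfies $\{\sigma(2w-1),\sigma(2w)\}=\{\alpha_w,\beta_w\}$. Hence $\sigma$ maps the particle set $L=\{1,\ldots,2g\}$ onto the positions $\{1,\ldots,2g\}$, and the particle set $R=\{2g+1,\ldots,2k\}$ onto the positions $\{2g+1,\ldots,2k\}$. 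In particular, $\sigma(i)<\sigma(j)$ whenever $i\in L$ and $j\in R$.

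\emph{Step 2: a connected matching must cross between the blocks.} Let $M$ be a connected matching. Suppose $M$ had no edge joining $L$ to $R$. The wall edges $(2w-1,2w)$ never join $L$ to $R$, since $L$ is a union of whole walls. Then the graph formed by the wall edges together with $M$ would have no edge between $L$ and $R$. Both sets are nonempty because $0<g<k$, so this graph would be disconnected. That contradicts its being a single Hamiltonian cycle on $\{1,\ldots,2k\}$. Hence $M$ contains an edge $\{i,j\}$ with $i\in L$ and $j\in R$, and in particular $i<j$.

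\emph{Step 3: conclude.} Compatibility of $\sigma$ with $M$ requires $\sigma(i)>\sigma(j)$ for this edge. Step 1 gives $\sigma(i)\le 2g<\sigma(j)$. So no lift of $I$ is compatible with $M$. Since this holds for every connected $M$, every inner sum in \eqref{eq:coloring-coefficient} is empty, and $c(I)=0$.

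The only point requiring care is Step 2, namely turning the Hamiltonian-cycle condition into the existence of a crossing edge. This is a simple connectivity argument, so I expect no real obstacle. One remark: the proof never uses the interleaving or tie-breaking structure, so it applies verbatim in both the continuous and discrete settings.
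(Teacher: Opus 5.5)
Your proposal is correct and follows the paper's proof essentially step for step. Every connected matching must contain an edge from $\{1,\ldots,2g\}$ to $\{2g+1,\ldots,2k\}$, while every lift maps the first block of particles into the first $2g$ positions, so the constraint $\sigma(i)>\sigma(j)$ fails on that edge and every inner sum in the definition of $c(I)$ is empty.
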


\begin{proof}
Every connected matching~$M$ is a Hamiltonian cycle on
$\{1, \ldots, 2k\}$. Since all wall edges are
within-group (both endpoints in $\{1, \ldots, 2g\}$
or both in $\{2g{+}1, \ldots, 2k\}$), the matching must
contain at least one crossing edge $\{i, j\}$ with
$i \in \{1, \ldots, 2g\}$ and
$j \in \{2g{+}1, \ldots, 2k\}$ (otherwise the cycle would
disconnect into two components).

For this crossing edge with $i < j$, compatibility
requires $\sigma(i) > \sigma(j)$. But the coloring maps
the first group's particles to the first group's positions,
so $\sigma(i) \in \{1, \ldots, 2g\}$ and
$\sigma(j) \in \{2g{+}1, \ldots, 2k\}$. Therefore
$\sigma(i) \leq 2g < 2g{+}1 \leq \sigma(j)$, and the
constraint is never satisfied. No lift is
compatible with any connected matching, so $c(I) = 0$.
\end{proof}

\section{Variance and the central limit theorem}
\label{sec:clt}

For discrete skip-free processes at a fixed time
horizon, the wall indicators have finite-range
dependence, so a central limit theorem follows from
classical results for locally dependent variables.
The coloring formula gives more than the CLT
itself: it provides the cumulant bound
$\kappa_k(N_L) = O(L)$ for each $k \geq 2$
(\Cref{sec:clt-proof-discrete}), and immediately
shows that the covariance is negative (repulsion)
for every skip-free process
(\Cref{sec:covariance-coloring}).

\subsection{Central limit theorem, discrete case}
\label{sec:clt-proof-discrete}

Write $N_L = N([0,L])$ for the number of walls in
the interval~$[0,L]$.

\begin{theorem}[Cumulant bound: discrete]
\label{thm:clt}
Consider a coalescing process on~$\ZZ$ with
crossing-or-meeting probabilities satisfying
\begin{equation}\label{eq:tail-summability}
\sum_{d=1}^{\infty}\;
d^N \;
\sup_{a \in \ZZ}\;
\PP\bigl(X_a \geq X_{a+d}\bigr)
\;<\; \infty
\qquad \text{for every } N \geq 0,
\end{equation}
where $X_a, X_{a+d}$ are independent particles
started at~$a$ and~$a + d$.
Then for each fixed~$k \geq 2$,
\[
\kappa_k(N_L) = O(L)
\qquad \text{as } L \to \infty.
\]
\end{theorem}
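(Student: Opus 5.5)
Write $N_L = \sum_{x} \eta_x$, where $x$ runs over the half-integers in $[0,L]$ and $\eta_x$ indicates a wall in the unit interval $(x-\tfrac12, x+\tfrac12)$, i.e.\ the interval with integer endpoints $a=x-\tfrac12$, $b=x+\tfrac12$. By multilinearity,
\[
\kappa_k(N_L) = \sum_{x_1,\ldots,x_k} \kappa(\eta_{x_1},\ldots,\eta_{x_k}).
\]
Repeated indices must be handled, since the coloring formula (\Cref{thm:coloring-formula}) is stated for disjoint intervals. I would first reduce to that case: $\eta_x^2=\eta_x$, so a mixed cumulant with repeated indices is a finite linear combination, with coefficients depending only on $k$, of products of cumulants of distinct indicators. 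This is the standard expansion of cumulants of products in terms of connected partitions (Leonov--Shiryaev). It therefore suffices to prove, for each $j \le k$,
\[
\sum_{x_1<\cdots<x_j} |\kappa(\eta_{x_1},\ldots,\eta_{x_j})| = O(L),
\]
and also that such cumulants decay fast enough in the spread $x_j - x_1$ for the Leonov--Shiryaev products to remain $O(L)$. The whole bound reduces to a decay estimate: $|\kappa(\eta_{x_1},\ldots,\eta_{x_j})| \le C\, F(x_j-x_1)$ with $\sum_D D^{j} F(D) < \infty$. Fixing $x_1$ ($O(L)$ choices) and the at most $D^{j-2}$ interior positions then gives $O(L)$.

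To get the decay, apply \Cref{thm:coloring-formula} to the $j$ distinct intervals (the case $j=1$ is trivial). Since $|c(I)|$ is bounded by a constant depending only on $j$, we get
\[
|\kappa| \le C_j \sum_{I \text{ indecomposable}} \PP(\text{coloring } = I),
\]
using \Cref{thm:indecomposability}. The key combinatorial claim is that an indecomposable coloring forces a chain of reorderings spanning the whole configuration. For each cut $g \in \{1,\ldots,j-1\}$ between walls $g$ and $g+1$, indecomposability says the final positions do not separate $\{1,\ldots,g\}$ from $\{g+1,\ldots,j\}$. So some particle started at or left of $b_g$ ends at or right of some particle started at or right of $a_{g+1}$. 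That is a crossing-or-meeting event for some pair straddling the cut.

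A union bound then gives
\[
\PP(I \text{ indecomposable}) \le \sum_{g=1}^{j-1} \sum_{\text{pairs straddling } g} \PP(X_s \ge X_{s'}),
\]
which only controls a single gap. Since cumulants also vanish when the walls split into independent groups, I need decay in the total spread. I would obtain it as follows. The largest gap is at least $(x_j-x_1)/(j-1)$, and at the cut with that gap the straddling pair has starting separation $d \ge (x_j-x_1)/(j-1) - 1$. By \eqref{eq:tail-summability}, $F(D) := \sup_a \PP(X_a \ge X_{a+d})$ with $d \gtrsim D/j$ is summable against every polynomial weight. Up to finitely many choices of which pair crosses, and monotonicity in $d$ (which I would derive, or else sidestep by summing over $d$ directly), this yields $|\kappa| \le C \sum_{d \gtrsim D/j} \sup_a \PP(X_a\ge X_{a+d})$. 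Summing with the weight $D^{j-1}$ converges by \eqref{eq:tail-summability} with $N = j$.

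The main obstacle is this largest-gap step: the union bound must produce decay in the full spread rather than in one gap. The pigeonhole argument on the largest gap is what makes it work. Correctly bounding a joint probability with non-crossing indicators by a single pairwise crossing probability is also delicate; dropping the indicators and using independence of the particles should suffice. The second delicate point is the repeated-index reduction, which I expect to be routine bookkeeping.
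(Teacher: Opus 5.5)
Your proposal is correct and follows essentially the paper's proof: the coloring formula, restriction to indecomposable colorings via \Cref{thm:indecomposability}, a forced crossing-or-meeting pair across the widest gap controlled by a union bound, and summation over configurations using~\eqref{eq:tail-summability}. The only difference is the bookkeeping for repeated indices. You derive a pointwise decay bound in the spread and feed it into a connected Leonov--Shiryaev expansion, whereas the paper delegates this to \Cref{prop:clt-indicators} and \Cref{lem:cumulant-bound}, which need only the per-site summability~\eqref{eq:per-site} of distinct-index cumulants (and your summing over~$d$ directly avoids the monotonicity in the separation that the paper uses tacitly in Step~3).
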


Morally, condition~\eqref{eq:tail-summability}
says that particles far apart almost never cross,
and the crossing probability decays faster than
any polynomial in their separation.

\begin{proof}
Fix $k \geq 2$. The wall count
\[
N_L = \sum_{x \in \ZZhalf \cap [0,L]} \eta_x,
\]
where $\eta_x = \mathbf{1}\{\text{wall at } x\}$,
is a finite sum of $0$-$1$ variables.
By \Cref{prop:clt-indicators}, it suffices to show
that for each fixed~$k$,
\begin{equation}\label{eq:per-site}
\sup_{x_0 \in \ZZhalf \cap [0,L]}\;
  \sum_{\substack{A \ni x_0,\; |A| = k \\
  A \subseteq \ZZhalf \cap [0,L]}}
  |\kappa_A| = O(1).
\end{equation}

\medskip
\noindent\textbf{Step 1: Reduction to indecomposable
colorings.}
Fix any $x_0 \in \ZZhalf$ and a set~$A$ of
size~$k$ containing~$x_0$. The coloring formula
(\Cref{thm:coloring-formula}) gives
\[
|\kappa_A|
\;\leq\;
\sum_I |c(I)| \cdot
  \PP_A\bigl(\text{coloring} = I,\;
  \text{all non-crossing}\bigr),
\]
where $\PP_A$ is over $2k$ independent particles
(two flanking each wall).
By \Cref{thm:indecomposability}, $c(I) = 0$
unless~$I$ is indecomposable. Restricting to
indecomposable~$I$ and exchanging the sums
over~$A$ and~$I$:
\begin{equation}\label{eq:DI-bound}
\text{LHS of~\eqref{eq:per-site}}
\;\leq\;
\sum_{\substack{I \text{ indecomp.}}} |c(I)|
  \sum_{\substack{A \ni x_0 \\ |A|=k}}
  \PP_A\bigl(\text{coloring} = I,\;
  \text{all non-crossing}\bigr).
\end{equation}

\medskip
\noindent\textbf{Step 2: Crossing at the widest gap.}
We show that the inner sum is finite. Write
$A = \{x_1 < \cdots < x_k\}$ with
$x_0 = x_j$ for some~$j$, and let
$d_{\max} = \max_g (x_{g+1} - x_g)$ be the
widest gap between consecutive walls.

Consider the split at this widest gap:
walls~$x_1, \ldots, x_{g^*}$ on the left,
$x_{g^*+1}, \ldots, x_k$ on the right.
Since~$I$ is indecomposable, the $2k$ final
positions (sorted left to right) do not
decompose at~$g^*$: some particle from the
left group ends up to the right of some
particle from the right group. These two
particles have crossed or met. By the union
bound over the at most~$4k^2$ pairs of
flanking particles,
\[
\PP_A(\cdots)
\;\leq\;
4k^2 \cdot \max_{\substack{a \leq x_{g^*}
  + \frac{1}{2} \\
  b \geq x_{g^*+1} - \frac{1}{2}}}
  \PP\bigl(X_a \geq X_b\bigr).
\]

\medskip
\noindent\textbf{Step 3: Summation over~$A$.}
Since~$b - a \geq d_{\max} - 1$ for any such
pair, and~$d_{\max} \geq
\max_i |x_i - x_0| / (k{-}1)$, summing
over all sets~$A$ with $d_{\max} = d$
(at most $(2(k{-}1)d{+}1)^{k-1}$ choices)
gives
\[
\sum_{\substack{A \ni x_0 \\ |A|=k}}
  \PP_A(\cdots)
\;\leq\;
4k^2 \sum_{d=1}^{\infty}
  (2(k{-}1)d{+}1)^{k-1} \cdot
  \sup_{a}\;
  \PP\bigl(X_a \geq X_{a+d-1}\bigr),
\]
which converges
by~\eqref{eq:tail-summability}
with $N = k - 1$.
Since the outer sum in~\eqref{eq:DI-bound}
has finitely many indecomposable colorings,
\eqref{eq:per-site} holds.
\end{proof}

\begin{remark}[Local dependence and scaling limits]
\label{rem:local-dependence}
For a discrete-time skip-free process
with~$T$ steps, the wall process has finite-range
dependence (range~$2T$), and the CLT is classical.
The per-site bound depends on the time horizon:
when~$T$ grows with the window~$L$, the crude bound
is useless. For example, if $L \sim \sqrt{T}$, the
dependence range $2T \sim L^2$ grows faster than the
window. This is where indecomposability
(\Cref{thm:indecomposability}) becomes essential: it
ensures that the per-site sum remains bounded as
$T \to \infty$, by localizing all simplex variables
through gap coverage
(\Cref{sec:non-colliding-conditioning}).
\end{remark}

\subsection{Central limit theorem, continuous case}
\label{sec:clt-proof-continuous}

For continuous processes, the tail
condition~\eqref{eq:tail-summability} cannot be
applied directly to a refined grid: as the grid
mesh~$\epsilon \to 0$, each wall's flanking
particles start at distance~$\epsilon$, so
$\PP(X_a \geq X_{a+\epsilon}) \to 1/2$ and the
sum diverges. The non-colliding conditioning
(\Cref{sec:non-colliding-conditioning}) resolves
this by factoring out the vanishing prefactor.

\begin{theorem}[Cumulant bound: continuous]
\label{thm:clt-continuous}
Consider a coalescing process on~$\RR$ with
continuous paths under the maximal entrance law.
For flanking particles at distance~$\epsilon$,
condition on non-crossing during~$[0, T]$
(\Cref{sec:non-colliding-conditioning}).
Assume:
\begin{enumerate}[label=\textup{(\roman*)},
  ref=\textup{(\roman*)}]
\item \label{item:wall-intensity}
  \emph{Wall intensity.} The limit
  \begin{equation}\label{eq:wall-intensity}
  \varrho(x)
  = \lim_{\epsilon \to 0}
    \frac{\PP(\textup{no crossing at
      distance } \epsilon \textup{ near } x)}
      {\epsilon}
  \end{equation}
  exists (the wall intensity may depend on the
  spatial position~$x$). In particular,
  $\EE[N_L] = \int_0^L \varrho(x)\, dx$, so
  the $k = 1$ bound is immediate.
  
\item \label{item:limiting-cond}
  \emph{Limiting conditioned process.}
  As $\epsilon \to 0$, the conditioned
  non-colliding pair converges to a
  limiting process (the Doob $h$-transform
  with $h(a,b) = b - a$).
\item \label{item:tail-cond-cont}
  \emph{Tail condition.} The
  crossing-or-meeting probabilities of
  particles from different limiting
  conditioned pairs satisfy
  \begin{equation}\label{eq:tail-summability-cont}
  \int_0^{\infty}
  \delta^N \;
  \sup_{a}\;
  \PP_{\mathrm{cond}}\bigl(
    Y_a^{(1)} \geq Y_b^{(2)}\bigr)
  \,\mathrm{d} \delta
  \;<\; \infty
  \qquad \text{for every } N \geq 0,
  \end{equation}
  where $Y_a^{(1)}, Y_b^{(2)}$ are final
  positions of particles from two different
  limiting conditioned pairs starting at
  distance~$\geq \delta$.
\end{enumerate}

Then for each fixed~$k \geq 2$,
\[
\kappa_k(N_L) = O(L)
\qquad \text{as } L \to \infty.
\]
\end{theorem}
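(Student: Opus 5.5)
We will follow the discrete proof of \Cref{thm:clt}, replacing sums over lattice sites by integrals over wall positions and using the non-crossing conditioning of \Cref{sec:non-colliding-conditioning} to remove the vanishing factor in~$\epsilon$.

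\emph{Discretization.} First we discretize at mesh~$\epsilon$: partition $[0,L]$ into cells of length~$\epsilon$ and write $N_L = \sum_x \eta_x^{(\epsilon)}$, where $\eta^{(\epsilon)}_x$ indicates a wall in the cell at~$x$. For fixed~$\epsilon$ this is a finite sum of indicators. Since $N_L$ is a.s.\ finite (the system comes down from infinity), $\kappa_k(N_L)$ is expanded by multilinearity into mixed cumulants $\kappa_A$ over $k$-tuples of cells (with repetitions allowed). Diagonal tuples, where cells repeat, reduce by idempotence to lower-order mixed cumulants with distinct cells, exactly as in \Cref{prop:clt-indicators}. It therefore suffices to bound, uniformly in~$\epsilon$ and~$L$, for each fixed $m \le k$,
\[
\sum_{x_0}\sum_{A\ni x_0,\,|A|=m}|\kappa_A| \le C_m L .
\]

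\emph{Factorization and bound.} For distinct cells, the conditioned coloring formula gives
\[
\kappa_A=\prod_{w}P_w(\text{no crossing})\,\EE_{\mathrm{cond}}[c(I)] .
\]
By~\ref{item:wall-intensity}, each prefactor $P_w(\text{no crossing})$ is at most $\epsilon\,(\varrho(x_w)+o(1))$. We will also assume $\varrho$ is locally bounded, and we expect to need it bounded on $[0,L]$ uniformly in~$L$ (for instance by spatial homogeneity). Hence the sum over $A$ becomes a Riemann sum
\[
\epsilon^m\sum_{x_1,\dots,x_m}\prod_w\varrho(x_w)\,\bigl|\EE_{\mathrm{cond}}[c(I)]\bigr| ,
\]
which will converge to an integral over $[0,L]^m$. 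By \Cref{thm:indecomposability}, only indecomposable colorings contribute. As in Step~2 of the discrete proof, splitting at the widest gap $d_{\max}=\delta$ shows that some particle of the left group must cross or meet some particle of the right group. This gives
\[
\bigl|\EE_{\mathrm{cond}}[c(I)]\bigr|\le \max_I|c(I)|\cdot 4m^2\sup_a \PP_{\mathrm{cond}}\bigl(Y^{(1)}_a\ge Y^{(2)}_b\bigr),
\]
with separation at least $\delta$. Fixing $x_0$ and integrating the other $m-1$ coordinates, which lie within $(m-1)\delta$ of~$x_0$, yields a bound
\[
C\int_0^\infty \delta^{m-2}\sup_a\PP_{\mathrm{cond}}(\cdots)\,d\delta ,
\]
which is finite by~\ref{item:tail-cond-cont}. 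Integrating over $x_0\in[0,L]$ then gives $O(L)$.

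\emph{Main obstacle.} The delicate step is passing from $\epsilon>0$ to the limit. The tail condition~\ref{item:tail-cond-cont} is stated only for the \emph{limiting} conditioned pairs, whereas the bound above is needed for the $\epsilon$-conditioned pairs uniformly in small~$\epsilon$. Our first attempt is to use~\ref{item:limiting-cond} to get convergence of $\PP_{\mathrm{cond},\epsilon}(Y^{(1)}_a\ge Y^{(2)}_b)$ to its limit, and then bound $\limsup_{\epsilon\to0}$ of the Riemann sums by the limiting integral via Fatou/dominated convergence. To obtain a dominating function, we would compare the $\epsilon$-conditioned law with the $h$-transform through monotone coupling: conditioning on non-crossing pushes the left particle left and the right particle right, so the outer particles can only become more separated. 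Since $\kappa_k(N_L)$ does not depend on~$\epsilon$, it suffices to bound the limsup as $\epsilon\to0$ of the discretized sums. Once domination is established, the remaining steps are the bookkeeping above.
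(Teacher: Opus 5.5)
Your outline is the paper's own argument: mesh-$\epsilon$ wall detection, the factorization $\kappa_A=\prod_w P_w(\text{no crossing})\,\EE_{\mathrm{cond}}[c(I)]$, indecomposability plus the widest-gap union bound, and Riemann sums against (iii). However, two steps do not work as written.

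First, $N_L=\sum_x\eta_x^{(\epsilon)}$ is not an identity at fixed $\epsilon$. A cell may contain several walls, so $N_L^{(\epsilon)}:=\sum_x\eta_x^{(\epsilon)}\le N_L$, and multilinearity expands $\kappa_k(N_L^{(\epsilon)})$, not $\kappa_k(N_L)$. The remark that $\kappa_k(N_L)$ does not depend on $\epsilon$ therefore does not let you pass to the limit. Nor does almost-sure finiteness of $N_L$ supply the moments a cumulant needs. You need the paper's closing step. Along dyadic meshes, $N_L^{(\epsilon)}\uparrow N_L$ almost surely (walls are a.s.\ finitely many and distinct), so moments converge by monotone convergence. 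Once the limits of $\kappa_j(N_L^{(\epsilon)})$, $j\le k$, are shown to be finite, this yields $\kappa_k(N_L)=\lim_{\epsilon\to0}\kappa_k(N_L^{(\epsilon)})$.

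Second, the domination you propose points the wrong way. The cross-gap pairs in the union bound include the \emph{right} flanking particle of a wall left of the gap and the \emph{left} flanking particle of a wall right of it. Conditioning each pair on non-crossing pushes exactly these two particles toward each other. For Brownian motion, the $\epsilon$-conditioned gap even stochastically dominates that of the $h$-transform limit. So a monotone coupling bounds the $\epsilon$-conditioned crossing probability from below, not from above, and gives no dominating function.

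What removes the obstacle is the order of limits, not a coupling. For fixed $L$, all wall positions lie in the compact set $[0,L]^m$. Assume $\sup_{x,\epsilon}P_w(\text{no crossing})/\epsilon<\infty$, a uniform form of (i) that you and the paper both use tacitly. Then every distinct-cell cumulant obeys $|\kappa_A|\le(C\epsilon)^{|A|}$, and the Riemann-sum integrands are uniformly bounded. Bounded convergence together with (ii) sends the sums to $\int_{[0,L]^m}\prod_w\varrho(x_w)\,\bigl|\EE_{\mathrm{cond},0}[c(I)]\bigr|\,dx$ over the limiting $h$-transform pairs. The repeated-cell terms reduce by idempotence to lower-order distinct-cell cumulants plus products carrying an extra factor $\epsilon$. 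These products vanish as $\epsilon\to0$ at fixed $L$.

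Only then should you apply indecomposability, the widest-gap union bound and (iii), which is stated precisely for the limiting pairs, to bound each limit by $C_mL$. The paper covers this step only by asserting that (ii) makes the Riemann sums converge. This fixed-$L$ bounded-convergence argument is what makes that assertion, and your bookkeeping, rigorous, with no tail domination in $\epsilon$ required.
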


\begin{proof}
Detect walls on a grid of mesh~$\epsilon$:
partition~$[0,L]$ into intervals of
length~$\epsilon$ and let
$\eta_j^{(\epsilon)} = \mathbf{1}\{\text{wall
in the $j$-th interval}\}$,
$N_L^{(\epsilon)} = \sum_j \eta_j^{(\epsilon)}$.
Each~$\eta_j^{(\epsilon)}$ is a $0$-$1$ indicator,
and the coloring formula
(\Cref{thm:coloring-formula}) applies with flanking
particles at the interval endpoints.

By the non-colliding conditioning
(\Cref{sec:non-colliding-conditioning}), each
cumulant factors as
\[
\kappa_A
= \prod_{w} P_w(\text{no crossing})
  \cdot \EE_{\mathrm{cond}}\bigl[c(I)\bigr],
\]
so
\[
|\kappa_A|
\;\leq\;
\prod_{w} P_w(\text{no crossing})
\cdot
\sum_I |c(I)| \cdot
  \PP_{\mathrm{cond}}\bigl(
  \text{coloring} = I\bigr).
\]
Apply the discrete-case argument
(\Cref{thm:clt}) to the conditional
probability: indecomposability
(\Cref{thm:indecomposability}) and the union
bound control the per-site sum.
By~\eqref{eq:wall-intensity},
$P_w(\text{no crossing})
= \varrho(x_w)\,\epsilon + o(\epsilon)$
for each wall at position~$x_w$, contributing a
factor
$\prod_w \varrho(x_w)\,\epsilon$ to $|\kappa_A|$.
The sums
over the $k - 1$ free wall positions on the
$\epsilon$-grid are Riemann sums: each sum
contributes a factor~$\epsilon$ times an
integral, and the integrals converge by the
conditioned tail
condition~\ref{item:tail-cond-cont}.
By~\ref{item:limiting-cond}, the conditioned
process converges as $\epsilon \to 0$, so the
Riemann sums converge to the corresponding
integrals.
Altogether,
$\kappa_k(N_L^{(\epsilon)}) = O(L)$
uniformly in~$\epsilon$.

As $\epsilon \to 0$, each interval contains at
most one wall (the expected wall count
$\int_0^L \varrho(x)\, dx$ is finite
by~\ref{item:wall-intensity}, so at most finitely
many walls exist almost surely), and
$N_L^{(\epsilon)} \to N_L$. Since
$0 \leq N_L^{(\epsilon)} \leq N_L$ and $N_L$ has
finite moments (it is bounded by the number of
walls in~$[0,L]$, which is almost surely finite
by~\ref{item:wall-intensity}), dominated convergence
gives convergence of all moments, hence
$\kappa_k(N_L) = O(L)$.
\end{proof}

\begin{remark}\label{rem:clt-bm}
For coalescing Brownian motion, the conditioned
pairs are non-colliding Brownian motions (the Doob
$h$-transform with $h(a,b) = b - a$). The
conditioned final positions have densities involving
$\varphi(y - x)$ (where $\varphi$ is the standard
normal density) and
$(y - x)\varphi(y - x)$
(\Cref{sec:non-colliding-conditioning}), so the
crossing-or-meeting probability between different
conditioned pairs decays with Gaussian tails,
and~\eqref{eq:tail-summability-cont} holds.
\end{remark}

For coalescing Brownian motion, the $k = 2$
case of \Cref{thm:clt-continuous} gives
$\kappa_2(N_L) = O(L)$; the exact variance
constant~$(\sqrt{2}-1)^2$ per unit length was
computed by Glinyanaya and
Fomichov~\cite{GlinyanayaFomichov2018}, giving
$\kappa_2(N_L) = \Theta(L)$.
The standard CLT criterion
(see, e.g.,~\cite{Soshnikov2002}) then gives:

\begin{corollary}[CLT for the wall count]
\label{cor:clt}
For coalescing Brownian motion under the maximal
entrance law,
\[
\frac{N_L - \EE[N_L]}
     {\sqrt{\operatorname{Var}(N_L)}}
\;\xrightarrow{d}\; \mathcal{N}(0,1)
\qquad \text{as } L \to \infty.
\]
\end{corollary}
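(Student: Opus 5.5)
The corollary follows from the method of cumulants: show that the normalized cumulants of order $k\ge 3$ vanish. Set $Z_L = (N_L - \EE[N_L])/\sqrt{\Var(N_L)}$. Then $\kappa_1(Z_L)=0$ and $\kappa_2(Z_L)=1$, and for $k\ge 3$
\[
\kappa_k(Z_L) = \frac{\kappa_k(N_L)}{\Var(N_L)^{k/2}}.
\]
It therefore suffices to show that $\kappa_k(Z_L)\to 0$ for every $k\ge 3$. Convergence of all cumulants to those of $\mathcal{N}(0,1)$ implies convergence of all moments. The Gaussian law is determined by its moments, so the method of moments then gives convergence in distribution. This is the ``standard CLT criterion'' of~\cite{Soshnikov2002}.

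\emph{Numerator.} We would first verify the hypotheses of \Cref{thm:clt-continuous} for coalescing Brownian motion.
\begin{enumerate}[label=\textup{(\roman*)}]
\item The wall intensity exists: two independent Brownian motions at distance $\epsilon$ fail to cross by time $T$ with probability $\PP(|W_{2T}| \text{ stays}\ldots)\sim \epsilon/\sqrt{\pi T}$, which gives a constant $\varrho$.
\item The conditioned pair converges to the non-colliding (Doob $h$-transformed) pair. This is classical.
\item The tail condition holds by \Cref{rem:clt-bm}. The conditioned final positions have Gaussian-type densities, built from $\varphi$ and $(y-x)\varphi(y-x)$. Hence the probability that particles from two pairs started at distance $\ge\delta$ cross is at most $Ce^{-c\delta^2}$ uniformly in the starting point, by translation invariance. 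This is integrable against every polynomial $\delta^N$.
\end{enumerate}
\Cref{thm:clt-continuous} then yields $\kappa_k(N_L)=O(L)$ for each fixed $k\ge 2$.

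\emph{Denominator.} We need a matching lower bound on the variance, and this is the step I expect to be the real obstacle: the coloring formula gives only upper bounds. Here we would invoke the exact computation of Glinyanaya and Fomichov~\cite{GlinyanayaFomichov2018}, which gives $\Var(N_L)\sim(\sqrt2-1)^2L$, hence $\Var(N_L)\ge cL$ for large $L$. An intrinsic route would also be possible. The $k=2$ coloring formula expresses $\Cov(\eta_x,\eta_y)$ as $-4$ times a positive interleaving probability. Integrating over $y$ with translation invariance gives $\Var(N_L)/L\to \varrho-\int \lvert\mathrm{cov\ density}\rvert$, and one would still have to check that this limit is strictly positive. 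That would require an explicit Gaussian integral, so citing the known constant is the efficient choice.

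\emph{Conclusion.} Combining the two bounds, for $k\ge3$,
\[
|\kappa_k(Z_L)| \le \frac{C_k L}{(cL)^{k/2}} = O\!\left(L^{1-k/2}\right)\to 0 .
\]
This completes the proof.
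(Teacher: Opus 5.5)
Your proposal is correct and follows the paper's own argument. You verify the hypotheses of \Cref{thm:clt-continuous} for Brownian motion (via \Cref{rem:clt-bm}) to get $\kappa_k(N_L)=O(L)$, take $\Var(N_L)=\Theta(L)$ from Glinyanaya--Fomichov, and conclude by the method of cumulants, since the $k$-th cumulant of the standardized count is $O(L^{1-k/2})$.

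As a minor aside, only positivity of the variance constant matters, and your intrinsic route closes with a short Gaussian integral. In rescaled units the truncated two-point density is $\mathcal{F}\mathcal{F}''-(\mathcal{F}')^2$, which gives $\Var(N_L)/L\to(3-2\sqrt{2})/\sqrt{\pi T}=(\sqrt{2}-1)^2\varrho>0$. So the constant $(\sqrt{2}-1)^2$ is really the variance-to-mean ratio, not the variance per unit length.
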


\subsection{Discussion}
\label{sec:clt-discussion}

\subsubsection{Structural insight from indecomposability}

The coloring formula adds structural insight beyond
the classical CLT: indecomposability explains
\emph{why} the dependence is local (every gap
between wall positions must be straddled by some
particle pair, preventing long-range correlations),
and the $k = 2$ analysis gives the sign of the
covariance for free
(\Cref{sec:covariance-coloring}).

\subsubsection{Comparison with kernel-based CLTs}

For time-homogeneous processes, a CLT can also be
obtained from the explicit Pfaffian
kernel~\cite{GlinyanayaFomichov2018,WangXu2025,LinQiuWang2021}.
The coloring formula provides a different route
that bypasses the kernel entirely, and therefore
applies to inhomogeneous processes where no kernel
is available. Moreover, the wall-level CLT
(\Cref{thm:clt}) is entirely self-contained: it
uses only the coloring formula and the
crossing-or-meeting probabilities of the original
process, with no duality or lattice structure
needed. The checkerboard enters only when
translating the wall CLT into a CLT for the
surviving particles
(\Cref{sec:walls-to-particles}).

\section{From empty intervals to point processes}
\label{sec:empty-to-pp}

The Pfaffian empty-interval formula
(\Cref{thm:pfaffian-empty-interval}) gives the
probability that specified intervals contain no
wall as a Pfaffian whose entries are
crossing-or-meeting probabilities of independent
particles. This section shows how to recover
correlation functions---the probability of finding
walls at specified sites---from the empty-interval
formula, yielding a Pfaffian point process for
the wall configuration.

\subsection{Continuous setting}
\label{sec:empty-to-pp-continuous}

In the continuous (Brownian motion) setting,
differentiation suffices: the $n$-point correlation
function is obtained by differentiating the
$n$-interval empty probability with respect to
the interval lengths and taking the zero-length
limit. For coalescing Brownian motions under the
maximal entrance law, this recovers the
Tribe--Zaboronski Pfaffian point
process~\cite{TZ2011,GarrodPTZ2018};
\Cref{sec:pfaffian} records the explicit kernel.

\subsection{Discrete setting}
\label{sec:empty-to-pp-discrete}

In the discrete setting, M\"obius inversion on the
Boolean lattice converts empty-interval probabilities
into correlation functions. Garrod, Poplavskyi, Tribe,
and Zaboronski~\cite[Theorem~1]{GarrodPTZ2018} showed
that the alternating sum of Pfaffians collapses to a
single Pfaffian with a $2 \times 2$ matrix kernel:
the passage from empty to occupied indicators acts as
a forward difference on the Pfaffian entries.
Feeding our empty-interval formula
(\Cref{thm:pfaffian-empty-interval}) into their
reconstruction gives the following.

\begin{proposition}[Discrete Pfaffian point process
{\cite[Theorem~1]{GarrodPTZ2018}}]
\label{prop:discrete-pp}
Under the maximal entrance law for any discrete
skip-free coalescing process with arbitrary
inhomogeneous transition probabilities,
the wall configuration at time~$T$
is a Pfaffian point process: the $n$-point
correlation function at
sites~$y_1 < \cdots < y_n$ on~$\ZZhalf$ is
\[
\rho_n(y_1, \ldots, y_n)
= \Pf\bigl[K(y_i, y_j)\bigr]_{1 \leq i,j \leq n},
\]
where $K$ is the $2 \times 2$ antisymmetric matrix
kernel
\begin{equation}
	\label{eq:discrete-kernel}
	K(y,z) = \begin{pmatrix}
		C(y,z) &
		-\Delta_z\, C(y,z) \\[0.3em]
		-\Delta_y\, C(y,z) &
		\Delta_y \Delta_z\, C(y,z)
	\end{pmatrix}
\end{equation}
with $C(y,z) \coloneqq
A(y {-} \tfrac{1}{2},\, z {-} \tfrac{1}{2})$ the
crossing-or-meeting probability of
\Cref{thm:pfaffian-empty-interval} evaluated at
positions $y {-} \tfrac{1}{2}$
and~$z {-} \tfrac{1}{2}$, and
$\Delta_y f(y) \coloneqq f(y{+}1) - f(y)$ the
forward difference.
\end{proposition}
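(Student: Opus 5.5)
The plan is to derive the correlation functions from \Cref{thm:pfaffian-empty-interval} by inclusion--exclusion, and then collapse the resulting alternating sum of Pfaffians into a single Pfaffian by multilinearity. Throughout, a site $y \in \ZZhalf$ carries a wall exactly when the unit interval $(y-\tfrac12, y+\tfrac12)$ contains one. Write $\xi_y = 1-\eta_y$ for the no-wall indicator of this interval. Then
\[
\rho_n(y_1,\ldots,y_n) = \EE\Bigl[\prod_i (1-\xi_{y_i})\Bigr] = \sum_{S\subseteq\{1,\ldots,n\}} (-1)^{|S|}\, \PP\bigl(\text{no wall in } (y_i-\tfrac12,y_i+\tfrac12),\ i\in S\bigr).
\]
Each term is, by \Cref{thm:pfaffian-empty-interval}, a Pfaffian $\Pf(A_S)$. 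The matrix $A_S$ is indexed by the endpoints $y_i\mp\tfrac12$ for $i\in S$, and its entries are $A(u,v)$, the crossing-or-meeting probability for $u<v$. \Cref{rem:touching-intervals} covers the case of adjacent sites, where $y_i+\tfrac12=y_{i+1}-\tfrac12$.

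To collapse the sum, I first build a $2n\times 2n$ antisymmetric matrix $B$ with rows $\mathrm{L}_i,\mathrm{R}_i$ for each site $i$, set on the full endpoint set. The entries are $B_{\mathrm{L}_i\mathrm{L}_j}=C(y_i,y_j)$, $B_{\mathrm{L}_i\mathrm{R}_j}=C(y_i,y_j+1)$, $B_{\mathrm{R}_i\mathrm{R}_j}=C(y_i+1,y_j+1)$, and so on. Here $C(y,z)=A(y-\tfrac12,z-\tfrac12)$ is extended antisymmetrically, with $C(y,y)=0$. The within-site entry is $B_{\mathrm{L}_i\mathrm{R}_i}=C(y_i,y_i+1)$.

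The key identity is the following. Let $B'$ be $B$ with every within-site entry $B_{\mathrm{L}_i\mathrm{R}_i}$ shifted by $-1$. Expand $\Pf(B')$ over matchings, splitting each within-site pair $\{\mathrm{L}_i,\mathrm{R}_i\}$ that appears in a matching as $C-1$. The expansion then reproduces the alternating sum exactly: a term in which sites outside $S$ take the ``$-1$'' pairing equals $\pm\Pf$ of the submatrix on $S$, up to sign bookkeeping. So $\sum_S(-1)^{|S|}\Pf(A_S)=\pm\Pf(B-J)$, where $J$ is the block-diagonal matrix with a standard symplectic $2\times2$ block at each site. This is the standard expansion $\Pf(M+N)=\sum_S \pm\Pf(M_S)\Pf(N_{S^c})$ for a block-diagonal $N$.

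Next I row- and column-reduce $B-J$ using the congruence $P(B-J)P^{T}$, replacing row and column $\mathrm{R}_i$ by $\mathrm{R}_i-\mathrm{L}_i$. This congruence has determinant $1$, so it leaves the Pfaffian unchanged. It turns the off-site entries into the forward differences $-\Delta_z C$, $-\Delta_y C$ and $\Delta_y\Delta_z C$, which are exactly the entries of~\eqref{eq:discrete-kernel} up to the sign convention. The diagonal block at site $i$ becomes $C(y_i,y_i+1)-1$. Its correct on-diagonal value should follow from the reduction together with the local relation $C(y,y+1)$ versus $C(y,y)=0$. After that, the reordering $\mathrm{L}_1,\mathrm{R}_1,\mathrm{L}_2,\ldots$ gives $\Pf[K(y_i,y_j)]$.

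The main obstacle is the diagonal and sign bookkeeping. The $-J$ correction has to combine with the diagonal block after differencing to produce $K(y_i,y_i)$ as the kernel dictates. I will check this against the one-point case $\rho_1(y)=1-A(y-\tfrac12,y+\tfrac12)=-\Delta_z C(y,z)|_{z=y}$, using $C(y,y)=0$, and fix conventions from there. The rest is the bookkeeping already done in \cite[Theorem~1]{GarrodPTZ2018}; the only inputs needed from our setting are the Pfaffian form of the empty-interval probabilities and the antisymmetry of $A$. These inputs do not depend on homogeneity, so the result holds for arbitrary inhomogeneous skip-free processes.
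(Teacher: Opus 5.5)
Your route is the one the paper relies on. The paper does not reprove this proposition: it feeds \Cref{thm:pfaffian-empty-interval} into the reconstruction of \cite[Theorem~1]{GarrodPTZ2018}, which is inclusion--exclusion collapsed into a single Pfaffian of forward-differenced entries. Your expansion $\Pf(B-J)=\sum_S(-1)^{n-|S|}\Pf(B_S)$ is correct; all signs are $+$, since the indices assigned to $J$ form whole consecutive site blocks. The unimodular congruence $\mathrm{R}_i\mapsto\mathrm{R}_i-\mathrm{L}_i$ is also correct.

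The genuine error is the convention $C(y,y)=0$, and it bites exactly at the bookkeeping you deferred. By \Cref{rem:touching-intervals}, coincident endpoints carry the entry $1$. For adjacent sites $y_{i+1}=y_i+1$, the entry $B_{\mathrm{R}_i\mathrm{L}_{i+1}}=C(y_{i+1},y_{i+1})$ is such a pair. With your value $0$, $B_S\neq A_S$ whenever $S$ contains adjacent sites, so the inclusion--exclusion identity fails. Concretely, take two adjacent sites with endpoints $a<b<c$. Your matrix gives $\Pf(B)=A(a,c)\,B_{\mathrm{R}_1\mathrm{L}_2}=0$, hence $\rho_2=1-A(a,b)-A(b,c)$. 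The true value is $1-A(a,b)-A(b,c)+A(a,c)$, because no wall at either site means no wall in $(a,c)$. Your one-point check is also miscomputed: $-\Delta_zC(y,z)|_{z=y}=C(y,y)-C(y,y+1)$, which equals $\rho_1(y)=1-C(y,y+1)$ only if $C(y,y)=1$.

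The fix is to set $C(y,y)=1$, the crossing-or-meeting probability of two particles from the same site. This is the boundary condition $K_t(y,y)=1$ of \Cref{sec:gptz-connection}. The kernel~\eqref{eq:discrete-kernel} itself needs this value, since $\Delta_yC(y,z)$ and $\Delta_y\Delta_zC(y,z)$ at $z=y+1$ involve $C(y+1,y+1)$.

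With this convention, your $-J$ shift is literally $-C(y_i,y_i)$. Your construction is then just multilinearity of the Pfaffian in the right-endpoint row and column, applied to $\eta_y=1-\xi_y$. Here the constant $1$ is the no-wall indicator of the degenerate interval $(y-\tfrac12,y-\tfrac12)$, whose block entry is $C(y,y)=1$. Negating the $n$ rows and columns $\mathrm{R}_i$ absorbs your factor $(-1)^n$. The congruence then yields $C$, $-\Delta_zC$, $-\Delta_yC$, $\Delta_y\Delta_zC$ for every $i<j$, adjacent sites included. It also yields $1-C(y_i,y_i+1)$ as the $(1,2)$ entry of each diagonal block, which is the only entry of $K(y_i,y_i)$ that enters the Pfaffian.
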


\section{The Brownian motion case}
\label{sec:direct-examples}

The Pfaffian empty-interval formula
(\Cref{thm:pfaffian-empty-interval}) applies to any
skip-free coalescing system. This section works out
the crossing-or-meeting probabilities for Brownian
motion on~$\RR$---a setting where these
probabilities have an explicit closed form. No
lattice duality or checkerboard construction is
needed; the formula operates directly on the
process.

\label{sec:bm-example}
\label{sec:pfaffian}
\label{ex:bm}

\subsection{The crossing-or-meeting probability}

Consider coalescing Brownian motions on~$\RR$
under the maximal entrance law: every point of~$\RR$
is initially occupied. The paths are continuous,
so the process is skip-free. Fix a time~$T > 0$.
Walls are real-valued: each wall marks a boundary
between two adjacent basins of attraction.

For two independent Brownian motions starting
at~$a < b$, the difference
$D(s) = B_b(s) - B_a(s)$ is a Brownian motion
starting at~$b - a > 0$ with variance~$2s$. The
particles cross or meet when $D$ hits zero. By the
reflection principle,
\begin{equation}
\label{eq:bm-crossing}
\PP(\text{crossing or meeting by time~$T$})
  = 2\,\Phi\!\left(
  -\frac{b-a}{\sqrt{2T}}\right)
  = \erfc\!\left(\frac{b-a}{2\sqrt{T}}\right),
\end{equation}
where $\Phi$ is the standard normal distribution
function.

\subsection{The Pfaffian formula}

Applying \Cref{thm:pfaffian-empty-interval}, the
probability that no wall lies in any of the
intervals $(a_1, b_1), \ldots, (a_n, b_n)$ is
$\Pf(A)$, where the $2n \times 2n$ antisymmetric
matrix~$A$ has entries
\[
A_{kl}
  = \erfc\!\left(
    \frac{|x_l - x_k|}{2\sqrt{T}}\right),
    \qquad k < l,
\]
and $x_1, \ldots, x_{2n}$ are the $2n$ endpoints
in order.

For a single interval~$(a, b)$, this gives
\[
\PP(\text{no wall in $(a, b)$})
  = \erfc\!\left(\frac{b - a}{2\sqrt{T}}\right),
\]
recovering the empty-interval probability of
Doering and
ben-Avraham~\cite{DoeringBenAvraham1988}. For
multiple intervals, the Pfaffian of erfc entries
recovers the formulas of
ben-Avraham~\cite{benAvraham1998}.
ben-Avraham and
Brunet~\cite{benAvrahamBrunet2005} used the same
empty-interval method to derive consecutive-particle
distributions and the thinning relation between
coalescence and annihilation correlations.

\subsection{The Tribe--Zaboronski kernel}

In rescaled coordinates (measuring positions in
units of~$\sqrt{T}$), the crossing-or-meeting
probability for positions~$x$ and~$y$ is
$\mathcal{F}(|y - x|)$, where
\[
\mathcal{F}(z)
  = \erfc(z/2)
  = \frac{1}{\sqrt{\pi}}
    \int_z^\infty e^{-u^2/4}\, du.
\]
Differentiating the Pfaffian empty-interval formula
with respect to the interval lengths and taking the
zero-length limit (\Cref{sec:empty-to-pp-continuous})
yields the $n$-point correlation
function as a Pfaffian with a $2 \times 2$ matrix
kernel:
\begin{equation}
\label{eq:tz-kernel}
K(x, y) = \begin{pmatrix}
-\mathcal{F}''(y-x) &
  -\mathcal{F}'(y-x) \\[0.3em]
\mathcal{F}'(y-x) &
  \sgn(y-x)\, \mathcal{F}(|y-x|)
\end{pmatrix}.
\end{equation}
This recovers the Tribe--Zaboronski Pfaffian point
process~\cite{TZ2011,GarrodPTZ2018}.

\begin{remark}
\label{rem:inhomogeneous-bm}
For Brownian motion with position- and
time-dependent diffusion coefficient and drift,
the empty-interval probabilities are still given
by a Pfaffian (the crossing-or-meeting
probabilities depend on the variable
coefficients). To extract a $2 \times 2$ matrix
kernel requires differentiating with respect to
the interval lengths, which imposes regularity
conditions on the variable coefficients. This
analytical problem is beyond the scope of this
paper.
\end{remark}

Coalescing Brownian motions satisfy the assumptions
of \Cref{thm:clt-continuous}; see \Cref{rem:clt-bm}.

\section{From walls to particles: checkerboard duality}
\label{sec:checkerboard}

This section constructs the checkerboard duality that
translates the wall-level results of
\Cref{sec:walls-pfaffian,sec:cumulants,sec:clt} into
statements about surviving particles.

\subsection{The dual forests}
\label{sec:checkerboard-decomposition}

\subsubsection{The lattice and the voter model}

Consider the integer lattice~$\ZZ^2$ and the
half-integer lattice
$\ZZhalf^2 = (\ZZ + \tfrac{1}{2})^2$, jointly
embedded in the $(u,v)$ plane. On each
diagonal~$u + v = n$ (where $n$ is an integer), the
two lattices alternate---$\ZZ^2$-vertices at integer
$u$-coordinates and $\ZZhalf^2$-vertices at
half-integer $u$-coordinates---like black and white
squares on a checkerboard (rotating
the $(u,v)$ plane by $45^\circ$ makes the two
sublattices visually checkerboard-like).
In the voter model
interpretation~\cite{HolleyLiggett1975},
$\ZZhalf^2$-vertices hold \emph{opinions} and
$\ZZ^2$-vertices carry \emph{boundaries} where
neighboring opinions differ.

\subsubsection{The backward opinion forest}

Each $\ZZhalf^2$-vertex
$(u + \tfrac{1}{2}, v + \tfrac{1}{2})$ copies its
opinion from one of two $\ZZhalf^2$-neighbors on
the previous diagonal:
\begin{itemize}
\item from the West neighbor
  $(u - \tfrac{1}{2}, v + \tfrac{1}{2})$ with
  probability~$p_{u,v}$;
\item from the South neighbor
  $(u + \tfrac{1}{2}, v - \tfrac{1}{2})$ with
  probability~$1 - p_{u,v}$;
\end{itemize}
where the weights $p_{u,v} \in [0,1]$ may vary
from vertex to vertex. The backward
edge connects each vertex to the neighbor whose
opinion it copied. Following backward edges traces
the chain of opinion inheritance to a common
ancestor. The selected backward edges form the
\emph{backward opinion forest} on~$\ZZhalf^2$,
with edges going West or South in the direction of
decreasing~$u + v$.

\subsubsection{The forward boundary forest}

On diagonal~$u + v = n$, a boundary exists at
$(u, n-u) \in \ZZ^2$ when the opinions at the
adjacent $\ZZhalf^2$-vertices
$(u - \tfrac{1}{2}, n - u + \tfrac{1}{2})$ and
$(u + \tfrac{1}{2}, n - u - \tfrac{1}{2})$ disagree.
Each backward choice at a $\ZZhalf^2$-vertex
determines a forward edge at the adjacent
$\ZZ^2$-vertex~$(u,v)$: copying from the West
produces a forward East edge (from $(u,v)$ to
$(u{+}1,v)$); copying from the South produces a
forward North edge (from $(u,v)$ to $(u,v{+}1)$).
These forward edges form the complementary
\emph{forward boundary forest} on~$\ZZ^2$, with
edges in the direction of increasing~$u + v$. The
two forests are non-crossing: the forward and
backward edges at each vertex do not cross in the
planar embedding (East pairs with West, North with
South; the crossing pairs East--South and
North--West are excluded).

Harris~\cite{Harris1978} introduced the graphical
construction: a single collection of independent
random choices that couples all initial
configurations on the same probability space.
Arratia~\cite{Arratia1979} observed that the same
random choices produce two complementary
non-crossing forests on interleaved sublattices (his
``percolation substructure''); the forward forest
carries boundaries and the backward forest carries
opinions.

\subsection{Boundaries as coalescing walks}
\label{sec:checkerboard-duality}

\subsubsection{Boundary propagation and coalescence}

Since all initial opinions are distinct (the maximal
entrance law), every $\ZZ^2$-vertex on
diagonal~$u + v = 0$ carries a boundary. Each
boundary propagates forward along the forward
forest: if a boundary exists at~$(u,v)$ and the
forward edge goes East, then the same boundary
appears at~$(u+1,v)$ on the next diagonal (because
the backward edge copies the opinion from the same
side, preserving the disagreement).
When two boundary paths meet at the same
vertex, they coalesce into one. The surviving
boundaries on~$\ZZ^2$ form coalescing walks---these
are the coalescing random walks of
Arratia~\cite{Arratia1979}. Symmetrically, starting
with opinions on diagonal~$u + v = T$
on~$\ZZhalf^2$ and following the backward edges
yields coalescing opinion
lineages on~$\ZZhalf^2$.

At a fixed diagonal~$u + v = T$, the coalescing
boundaries are precisely the walls of the backward
process (\Cref{sec:walls-coalescence}): each
boundary separates two adjacent opinion basins, and
the absence of a boundary between two vertices means
their backward lineages have coalesced.

\subsubsection{Time as a coordinate choice}

Time is derived, not primitive:
setting $t = u + v$ foliates the lattice into time
slices, and different space functions $x = g(u,v)$
yield different particle dynamics
(\Cref{sec:examples}).

\subsubsection{From walls to particles}
\label{sec:walls-to-particles}

To study the surviving particles at time~$T$, apply
the wall-level results of
\Cref{sec:walls-pfaffian,sec:cumulants,sec:clt} to
the dual backward process, whose walls are precisely
the surviving boundaries of the forward process.

\subsection{Connection with the spin-pair duality}
\label{sec:gptz-connection}

\subsubsection{The dual particles}

The spin-pair duality
of~\cite{TZ2011,GarrodPTZ2018} arrives at the same
mathematical object by a different route. Their
scalar kernel
$K_t(y,z) = \EE[\sigma_{y,z}(\eta_t)]$,
where
$\sigma_{y,z}(\eta) = (-\theta)^{\eta[y,z)}$
is the spin-pair function,
satisfies the PDE
$(L_y + L_z)\, K_t = \partial_t K_t$,
where $L$ is the generator of a single particle.
Garrod, Poplavskyi, Tribe, and
Zaboronski~\cite{GarrodPTZ2018} describe $L$ as
``the generator for a single dual particle.'' These
dual particles are our backward particles: the PDE
says that two independent particles with
generator~$L$ evolve at positions $y$ and~$z$,
which is precisely what the Pfaffian formula
computes---crossing-or-meeting probabilities
of independent backward particles from the
interval endpoints.

\subsubsection{Boundary condition and reconstruction}

The boundary condition $K_t(y,y) = 1$ is automatic
in the checkerboard framework: two particles
starting at the same position have
crossing-or-meeting probability~$1$.
The reconstruction step---passing from the scalar
kernel~$K_t$ to the $2 \times 2$ matrix kernel via
discrete differences
(\Cref{prop:discrete-pp})---also coincides:
in~\cite{GarrodPTZ2018}, the identity
$\eta(y) = (1 - \sigma_{y,y+1})/(1+\theta)$
produces the same forward differences~$\Delta_y$
applied to the same scalar kernel.

\subsubsection{Algebraic vs.\ geometric discovery}

The two approaches discover the dual particles by
different means: the spin-pair duality finds them
algebraically (the generator~$L$ emerges from
computing the action on spin pairs), while the
checkerboard construction makes them geometrically
visible as paths on the lattice. For
time-homogeneous dynamics with $\theta = 0$, the
resulting Pfaffian point processes are identical.

\subsubsection{Non-maximal initial conditions}

The correspondence extends to non-maximal initial
conditions. In~\cite{GarrodPTZ2018}, the initial
condition~$\eta$ enters through the PDE initial
data $K_0(y,z) = \sigma_{y,z}(\eta)$; for
$\theta = 0$, this is
$K_0(y,z) = \mathbf{1}\{\eta[y,z) = 0\}$, the
indicator that the interval~$[y,z)$ is initially
empty. In the checkerboard framework, the
analogous role is played by the clamping step
(\Cref{sec:general-initial}). Both encode the
same data---which intervals are initially
empty---by different means.

\subsection{General initial conditions}
\label{sec:general-initial}

\subsubsection{Two meanings of ``no wall''}

The wall-level results
(\Cref{sec:walls-pfaffian,sec:cumulants,sec:clt})
assume the maximal entrance law: every site is
initially occupied. In applications one often starts
from a non-maximal initial condition---for instance,
a finite configuration. The proof of the Pfaffian
formula (\Cref{thm:pfaffian-empty-interval}) relies
on the equivalence: ``no wall in~$(a,b)$'' equals
``the flanking particles from~$a$ and~$b$ have
coalesced''
(\Cref{prop:no-wall-coalescence}). Under a general
initial condition this equivalence breaks: ``no wall
in~$(a,b)$'' can also hold when the flanking
particles have not coalesced but no initial particle
exists between~$a$ and~$b$.

\subsubsection{The clamping step}

To restore the equivalence, adjoin one deterministic
step to the dual backward process (from time~$0$
to time~$-1$): clamp together all backward particles
that have no initial forward particle between them.
After clamping, ``no wall in~$(a,b)$'' once again
implies that the (augmented) backward particles have
coalesced---either during the $T$~random steps
(from time~$T$ to time~$0$) or by the clamp.
The clamping step is automatically
skip-free: a particle cannot jump over another when
there is nothing between them to jump over.

The augmented process---$T$~random steps and one
deterministic---is an inhomogeneous
skip-free process, and the Pfaffian
formula~\cite{Sniady2026annihilation} allows
arbitrary inhomogeneous transition probabilities,
including deterministic ones.

\subsection{The augmented kernel}
\label{sec:general-initial-extension}

\begin{theorem}[Pfaffian formula for general
initial conditions]
\label{thm:pfaffian-general-ic}
Consider a coalescing skip-free process with
deterministic initial condition~$\eta$ (a subset
of occupied sites). Run the system for time~$T$,
and let $a_1 < b_1 \leq a_2 < \cdots \leq a_n
< b_n$ be given. Then:
\[
\PP(\text{no wall in any $(a_i, b_i)$})
  = \Pf(A),
\]
where $A$ is the $2n \times 2n$ antisymmetric
matrix whose entry~$A_{kl}$ ($k < l$) is the
probability that independent backward particles
from~$x_k$ and~$x_l$ either (a) cross or meet during
the $T$~steps, or (b) do not cross or meet but $\eta$
has no particle between their final positions.
\end{theorem}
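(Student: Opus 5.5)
The plan is to reduce to \Cref{thm:pfaffian-empty-interval}, applied to the augmented backward process built by the clamping step of \Cref{sec:general-initial}. Concretely, I would work on the checkerboard lattice, where walls of the forward process at time~$T$ are read off from backward lineages started at the interval endpoints. The augmented process runs in two stages. It first takes the $T$~random backward steps, from time~$T$ to time~$0$. It then takes one deterministic step, from time~$0$ to time~$-1$, which maps every position in a maximal run of sites containing no initial particle of~$\eta$ to a single common representative. Every particle is thus sent to a point depending only on the gap of~$\eta$ in which it lies. I would check that this map is monotone (order-preserving). As a consequence, the augmented process is an inhomogeneous skip-free process to which the Karlin--McGregor assumptions, and hence the Pfaffian annihilation formula of~\cite{Sniady2026annihilation}, apply. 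That formula explicitly allows deterministic transitions and coincident positions, as noted in \Cref{rem:touching-intervals}.

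The first key step is the analogue of \Cref{prop:no-wall-coalescence}. I would show that there is no forward wall in~$(a,b)$ at time~$T$ if and only if the augmented backward lineages from~$a$ and~$b$ have coalesced by time~$-1$. A forward boundary in~$(a,b)$ at time~$T$ traces back along the forward forest to an initial boundary at time~$0$. Under~$\eta$, a boundary at time~$0$ is a genuine separation only if initial particles lie on both sides of it, or, more precisely, only if the two adjacent opinion basins contain distinct initial particles. So no wall means one of two things. Either the backward lineages met during the $T$ steps, or they arrive at time~$0$ in positions between which $\eta$ has no particle. In the second case the clamp merges them. The converse runs the same argument backwards. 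The skip-free property ensures that intermediate lineages are sandwiched, exactly as in the proof of \Cref{prop:no-wall-coalescence}.

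Given this equivalence, the rest follows the proof of \Cref{thm:pfaffian-empty-interval} verbatim. The event becomes pairwise coalescence of the $2n$ augmented lineages, and \Cref{prop:parity-trick} turns it into total annihilation under the cancellative labeling, since only skip-freeness was used there. The annihilation formula then gives $\Pf(A)$, with $A_{kl}$ the crossing-or-meeting probability of two \emph{independent} augmented particles. For independent particles, crossing or meeting in the augmented process splits into two disjoint cases. The first is that they cross or meet during the $T$ random steps, which is case~(a). The second is that they do not, but the deterministic clamp sends them to the same point, which happens exactly when $\eta$ has no particle between their time-$0$ positions; this is case~(b). The clamp cannot produce a strict reversal, because it is monotone.

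I expect the main obstacle to be the precise wall/coalescence equivalence under a non-maximal~$\eta$. This requires pinning down what a ``wall'' means when some sites are empty, namely a boundary between basins of actual surviving particles. It also requires handling endpoints that land exactly on occupied sites, where the convention ``between'' should mean strictly between. I would settle these conventions first on the checkerboard lattice and verify the case analysis there, before invoking the general machinery.
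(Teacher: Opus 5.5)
Your proposal is correct and follows the paper's proof. You adjoin the deterministic clamping step to the backward process, which restores the equivalence of ``no wall'' with pairwise coalescence (\Cref{prop:no-wall-coalescence}). You then pass to total annihilation via the cancellative labeling and apply the Pfaffian annihilation formula of~\cite{Sniady2026annihilation}, reading off $A_{kl}$ as the crossing-or-meeting probability of the augmented process; along the way you supply details the paper leaves implicit, namely the clamp as an explicit monotone map onto the gaps of~$\eta$ and the observation that this monotonicity rules out new strict reversals, so that cases (a) and (b) exhaust the augmented event.
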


Under the maximal entrance law the second
alternative is vacuous, recovering
\Cref{thm:pfaffian-empty-interval}.

\begin{proof}
The clamping step restores the equivalence between
``no wall'' and ``pairwise coalescence'' for the
augmented process
(\Cref{prop:no-wall-coalescence}). The augmented
process---$T$~random steps and one
deterministic---is an inhomogeneous skip-free
process, so the Pfaffian annihilation
formula~\cite{Sniady2026annihilation} applies.
The crossing-or-meeting probability for the
augmented process is exactly the entry~$A_{kl}$
described above.
\end{proof}

Since the augmented process is skip-free and
inhomogeneous, the cumulant coloring formula
(\Cref{thm:coloring-formula}) and the central
limit theorem (\Cref{thm:clt}) also hold for the
augmented kernel, extending both results to
general initial conditions.

\section{Checkerboard examples}
\label{sec:examples}

The checkerboard lattice carries no intrinsic notion
of space and time. A choice of time function
$t = f(u,v)$ and space function $x = g(u,v)$ converts
the forests into particle trajectories: on each level
set $t = \text{const}$, the vertices become particle
positions and forward-forest paths become
trajectories. This section works out explicit formulas
for several particle systems arising from different
coordinate choices.

\subsection{Biased random walk}
\label{sec:biased-rw}

Fix a probability~$p \in (0,1)$ and set $q = 1 - p$.
Set all edge weights equal: $p_{u,v} = p$ for every
vertex. In Arratia's coordinates $t = u + v$,
$x = u - v$, the forward boundary forest on~$\ZZ^2$
gives coalescing random walks on~$\ZZ$: at each time
step, a boundary particle jumps $+1$ (East in the
$(u,v)$ plane) with probability~$p$, or $-1$ (North)
with probability~$q$.

\subsubsection{Backward dynamics}

The Pfaffian formula
(\Cref{thm:pfaffian-empty-interval}) involves \emph{backward}
particles on~$\ZZhalf^2$. Each backward particle
follows the edges of the backward forest---these are
random edges, determined by the same random choices
that generate the forward forest
(\Cref{sec:checkerboard-decomposition}). The
backward particle does not make its own random
choices; it traces the pre-existing backward arrows.

When the random choice at~$(u,v) \in \ZZ^2$ selects
East (probability~$p$), the complementary backward
edge goes West. In $(t,x)$-coordinates, West means
$x \to x - 1$. When the choice selects North
(probability~$q$), the backward edge goes South,
meaning $x \to x + 1$. Since the edge weights are
constant, the backward steps are independent and
identically distributed, giving a random walk with
\emph{swapped probabilities}: the backward particle
jumps $-1$ with probability~$p$ and $+1$ with
probability~$q$.

\begin{remark}
\label{rem:backward-reversal}
When $p \neq q$, the backward walk differs from the
forward walk: forward boundaries jump right with
probability~$p$, while backward particles jump right
with probability~$q$. The reversal arises because
the forward and backward edges at each vertex point in
opposite directions. For $p = q = \tfrac{1}{2}$
(\Cref{sec:srw-example}), the reversal is trivial
and both walks are simple symmetric random walks.

For inhomogeneous edge weights~$p_{u,v}$, the
backward particle follows a path through a random
environment where the step probabilities vary from
vertex to vertex, determined by the specific
weights~$p_{u,v}$ encountered along the backward
path. The transition weight~\eqref{eq:bwd-transition}
must then be computed for the inhomogeneous
probabilities.
\end{remark}

\subsubsection{The crossing-or-meeting probability}

The backward transition weight---the probability
that a backward particle at position~$x$ on
diagonal~$u + v = T$ reaches position~$y$ on
diagonal~$u + v = 0$ after $T$~backward steps---is
\begin{equation}
\label{eq:bwd-transition}
w_T(x, y) \;=\;
  \binom{T}{\frac{T + y - x}{2}}\,
  q^{\frac{T + y - x}{2}}\,
  p^{\frac{T - y + x}{2}},
\end{equation}
where $\tfrac{T + y - x}{2}$ counts the rightward
jumps (each with probability~$q$ in the backward
walk), and the expression is zero
unless $\tfrac{T + y - x}{2}$ is a non-negative
integer at most~$T$.

For two backward particles at $x$-positions
$x_I < x_J$ on diagonal~$u + v = T$, the probability that they cross or meet
(\Cref{sec:cancellative-labeling};
see~\cite{Sniady2026annihilation}) is
\begin{equation}
\label{eq:Aij-general}
A_{IJ} \;=\; 2\!\!\sum_{y_1 < y_2}
  w_T(x_I, y_2)\, w_T(x_J, y_1)
  \;\;+\; \sum_{y} w_T(x_I, y)\, w_T(x_J, y),
\end{equation}
where $y_1, y_2$ range over all positions reachable
in $T$ backward steps.
In each term, particle~$I$ (starting at the
smaller position~$x_I$) ends strictly right of
particle~$J$: the paths have crossed.
The factor~$2$ arises because expanding
$A_{IJ} = 1 - \det(\cdots)$ yields the crossing
probability twice (once from the total, once
from the sign reversal in the determinant).
The second sum counts pairs ending at the same
position. Equivalently,
$A_{IJ}$ equals the total probability minus the
Karlin--McGregor non-crossing
probability~\cite{KM1959}:
\[
A_{IJ} \;=\; 1 \;-\; \sum_{y_1 < y_2}
  \det \begin{pmatrix}
    w_T(x_I, y_1) & w_T(x_I, y_2) \\
    w_T(x_J, y_1) & w_T(x_J, y_2)
  \end{pmatrix}.
\]
We set $A_{JI} = -A_{IJ}$ to obtain an
antisymmetric matrix.

\subsubsection{The Pfaffian empty-interval formula}

In $(t,x)$-coordinates,
\Cref{thm:pfaffian-empty-interval} reads as follows.
For half-integer positions $a < b$, the
$2 \times 2$ Pfaffian
reduces to a single matrix entry:
\[
\PP(\text{no wall in $(a, b)$ at time $T$}) = A_{12}
\quad \text{with } x_1 = a,\; x_2 = b.
\]

For two disjoint intervals $(a_1, b_1)$ and
$(a_2, b_2)$ with $a_1 < b_1 < a_2 < b_2$, the
$4 \times 4$ Pfaffian expands as
\[
\Pf(A) = A_{12}\, A_{34}
       - A_{13}\, A_{24}
       + A_{14}\, A_{23},
\]
where each $A_{IJ}$ is computed
from~\eqref{eq:Aij-general}. Every entry is an
explicit double sum of products of binomial
coefficients.

\subsection{Symmetric random walk}
\label{sec:srw-example}
\label{ex:symmetric}

When $p = q = \tfrac{1}{2}$, the forward and
backward dynamics coincide: both are simple symmetric
$\pm 1$ random walks. This is the original setting
of Arratia~\cite{Arratia1979}.

\subsubsection{The crossing-or-meeting probability}

The transition weight (probability of moving
from~$x$ to~$y$ in $T$~steps) is
\begin{equation}
\label{eq:srw-transition}
w_T(x, y) = \binom{T}{\frac{T + y - x}{2}}\,
  \frac{1}{2^T},
\end{equation}
where the expression is zero unless
$\tfrac{T + y - x}{2}$ is a non-negative integer
at most~$T$.

For two independent particles starting
at~$a < b$, the crossing-or-meeting probability
equals~$1$ minus the
Karlin--McGregor~\cite{KM1959} non-crossing
probability:
\begin{equation}
\label{eq:srw-crossing}
A_{ab} = 1 - \sum_{y_1 < y_2}
  \det \begin{pmatrix}
    w_T(a, y_1) & w_T(a, y_2) \\
    w_T(b, y_1) & w_T(b, y_2)
  \end{pmatrix}.
\end{equation}

\subsubsection{The Pfaffian formula}

By \Cref{thm:pfaffian-empty-interval}, the
probability that no wall lies in any of the
intervals $(a_1, b_1), \ldots, (a_n, b_n)$ is
$\Pf(A)$, where $A$ is the $2n \times 2n$
antisymmetric matrix with
entries~\eqref{eq:srw-crossing}. Every entry is
an explicit finite sum of products of binomial
coefficients.

Under diffusive scaling, the transition
weight~$w_T$ converges to the Gaussian density
and the crossing-or-meeting
probability~\eqref{eq:srw-crossing} converges to
the complementary error
function~\eqref{eq:bm-crossing}, recovering the
Brownian motion case.

\subsection{Totally asymmetric dynamics}
\label{sec:asymmetric-walk}
\label{ex:asymmetric}

In coordinates $t = u + v$, $x = u$, a forward
boundary particle jumps $+1$ (East) with
probability~$p$ or stays (North) with
probability~$q$. The backward particle jumps $-1$
or stays, giving the transition weight
\[
w_T(x, y) = \binom{T}{x - y}\,
  p^{x-y}\, q^{T - x + y}
\]
for $0 \leq x - y \leq T$. Substituting into the
Karlin--McGregor complement gives $A_{IJ}$ as a
finite double sum of products of binomial
coefficients, and
\Cref{thm:pfaffian-empty-interval} yields a Pfaffian
empty-interval formula for this totally asymmetric
system. All existing Pfaffian point process results
for coalescing particles require bidirectional
dynamics~\cite{TZ2011,GarrodPTZ2018}.

\subsubsection{Poisson jumps}

With jump probability~$\lambda \epsilon$ and step
size~$\epsilon \to 0$, the forward particles
perform independent Poisson jumps of $+1$ at
rate~$\lambda$ and coalesce upon meeting. The
backward transition weight becomes
\[
w_t(x, y) = e^{-\lambda t}\,
  \frac{(\lambda t)^{x-y}}{(x-y)!}
\]
for $x \geq y$. For backward particles separated
by $\Delta = x_J - x_I$, the Karlin--McGregor
complement gives
\[
A_{IJ} = 1 - e^{-2\lambda t}
  \Bigl[I_0(2\lambda t) +
  2\!\sum_{k=1}^{\Delta-1} I_k(2\lambda t) +
  I_\Delta(2\lambda t)\Bigr],
\]
where $I_k$ is the modified Bessel function of the
first kind. To derive this, observe that the
separation $D(t) = X_J(t) - X_I(t)$ of two
independent backward particles is a symmetric
continuous-time random walk on~$\ZZ$ starting
at~$\Delta$, making $\pm 1$ jumps at rate~$\lambda$
each. Its marginal distribution
is~\cite{Skellam1946}
$\PP(D(t) = k) = e^{-2\lambda t}\,
I_{|k-\Delta|}(2\lambda t)$.
Since the walk is skip-free and symmetric, the
reflection principle gives
\[
A_{IJ} = 2\,\PP(D(t) < 0) + \PP(D(t) = 0).
\]
Indeed, every path from~$\Delta$ to~$k > 0$ that
touches~$0$ bijects with a path from~$-\Delta$
to~$k$, which by symmetry has the same weight as a
path from~$\Delta$ to~$-k$. Summing the tail and
using $\sum_{k=-\infty}^{\infty} \PP(D(t) = k) = 1$
gives the Bessel formula.

\subsection{Bidirectional Poisson coalescence}
\label{sec:bidirectional-poisson}
\label{ex:bidirectional-poisson}
\label{sec:continuous-time}
\label{ex:continuous-time}

In coordinates $t = u + v$,
$x = u - \lfloor t/2 \rfloor$, a boundary particle
jumps $+1$ on even time steps and $-1$ on odd time
steps. With alternating edge weights
$p_{u,v} = \lambda_+ \epsilon$ on even diagonals
and $p_{u,v} = 1 - \lambda_- \epsilon$ on odd
diagonals, the $\epsilon \to 0$ limit gives
coalescing particles with independent Poisson jumps
of $+1$ at rate~$\lambda_+$ and $-1$ at
rate~$\lambda_-$. The backward particle has swapped
rates ($+1$ at rate~$\lambda_-$, $-1$ at
rate~$\lambda_+$), giving the transition weight
\[
w_t(x, y) = e^{-\lambda t}
  \left(\frac{\lambda_-}{\lambda_+}\right)
  ^{\!\!(y-x)/2}
  I_{|y - x|}\left(2\sqrt{\lambda_+ \lambda_-}\ t\right),
\]
where $\lambda = \lambda_+ + \lambda_-$ is the
total jump rate. The crossing-or-meeting probability
depends only on~$\lambda$, not on the individual
rates: the difference of two independent particles
is a symmetric process regardless of the drift.
The Karlin--McGregor complement gives
\[
A_{IJ} = 1 - e^{-2\lambda t}
  \Bigl[I_0(2\lambda t) +
  2\!\sum_{k=1}^{\Delta-1} I_k(2\lambda t) +
  I_\Delta(2\lambda t)\Bigr]
\]
with $\Delta = x_J - x_I$. This coincides with the
totally asymmetric formula
(\Cref{sec:asymmetric-walk}): in both cases the
difference of two independent backward particles
performs the same symmetric random walk.

More generally, the rates $\lambda_+$ and~$\lambda_-$
may vary with position and time,
modeling the continuous-time voter model with
inhomogeneous copying rates. The Pfaffian formula
(\Cref{thm:pfaffian-empty-interval}) still applies;
the transition weight~$w_t$ must then be computed
for the inhomogeneous rates along each backward
path.

\appendix

\section{CLT for sums of indicators via idempotence}
\label{sec:appendix-clt}

This appendix proves a central limit theorem for sums
of dependent $0$-$1$ random variables, using only
the joint cumulants of \emph{distinct} indices.
The key simplification is that indicator variables
are idempotent ($X_i^2 = X_i$), which collapses
repeated-index cumulants to distinct-index ones.
The ingredients are individually well known
(multilinearity of cumulants, the idempotent
multinomial identity, and the method of cumulants).
In the continuous setting, the analogous reduction
is classical: factorial cumulant measures
(see Daley and
Vere-Jones~\cite[Chapter~5]{DaleyVereJones2008})
are defined on tuples of distinct points, and since
the diagonal has measure zero in $\RR^k$, ordinary
and factorial cumulants of a count coincide.
In the discrete setting, however, repeated indices
contribute nontrivially, and the idempotence
reduction below handles them; we include the short
proof for completeness.

\subsection{The CLT criterion}
\label{sec:clt-criterion}

\begin{proposition}
\label{prop:clt-indicators}
For each $n \geq 1$, let $X_1, \ldots, X_n$ be $0$-$1$
random variables (possibly dependent), and let
$S_n = X_1 + \cdots + X_n$.
Write $\kappa_A = \kappa(X_i : i \in A)$ for the joint
cumulant indexed by a set~$A \subseteq [n]$ of distinct
indices. Assume:
\begin{enumerate}[label=(\roman*)]
\item \label{it:summability}
  For each fixed $L \geq 1$,
  \[
  \sup_{i \in [n]}\;
    \sum_{\substack{A \subseteq [n] \\
      i \in A,\; |A| = L}}
    |\kappa_A| = O(1),
  \]
  where the implied constant may depend on~$L$ but not
  on~$n$.
\item \label{it:variance}
  $\Var(S_n) \asymp n$.
\end{enumerate}
Then for each fixed $m \geq 1$,
$\kappa_m(S_n) = O(n)$, and
\[
\frac{S_n - \EE[S_n]}{\sqrt{\Var(S_n)}}
\;\xrightarrow{d}\; \mathcal{N}(0,1)
\qquad \text{as } n \to \infty.
\]
\end{proposition}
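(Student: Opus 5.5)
The plan is the method of cumulants. We expand $\kappa_m(S_n)$ by multilinearity, collapse repeated indices using idempotence, and bound the result with hypothesis~\ref{it:summability}. After that, normalization and hypothesis~\ref{it:variance} give vanishing of all normalized cumulants of order $m\ge 3$, which yields the Gaussian limit.

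\emph{Step 1: multilinear expansion.} By multilinearity,
\[
\kappa_m(S_n)=\sum_{i_1,\ldots,i_m\in[n]}\kappa(X_{i_1},\ldots,X_{i_m}).
\]
Group the tuples $(i_1,\ldots,i_m)$ by the set $A=\{i_1,\ldots,i_m\}$ of distinct values, together with the multiplicities $r_j\ge1$ of each $j\in A$. The mixed cumulant $\kappa(X_j^{(r_j)}:j\in A)$, with each $X_j$ repeated $r_j$ times, is not directly $\kappa_A$. However, since $X_j^r=X_j$, the joint moments of any sub-multiset depend only on its support. Hence the moment–cumulant formula (Leonov–Shiryaev) expresses the repeated-index cumulant as
\[
\kappa(X_j^{(r_j)}:j\in A)=\sum_{\pi}\ \prod_{B\in\pi}\kappa_{\mathrm{supp}(B)}.
\]
The sum runs over partitions $\pi$ of the multiset $\{1,\ldots,m\}$ (positions) such that, after merging positions carrying the same index, the blocks become connected. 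These are precisely the partitions appearing in the "cumulants of products" formula applied to $X_j=X_j^{r_j}$. This is the idempotent identity. I will take it as standard but check that it produces only finitely many terms, with coefficients depending on $m$ alone. Each term is a product of distinct-index cumulants $\kappa_{B_1}\cdots\kappa_{B_s}$ whose supports $B_t\subseteq A$ form a connected hypergraph covering $A$.

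\emph{Step 2: the $O(n)$ bound.} It remains to bound
\[
\sum_{A,\,|A|\le m}\ \sum_{\text{connected covers}}\ \prod_t|\kappa_{B_t}|.
\]
Fix the combinatorial shape of the cover, of which there are finitely many. Sum over its realisations by building a spanning tree of the blocks. Choose the root block's first vertex freely, giving $n$ choices. The root block's sum containing that vertex is $O(1)$ by~\ref{it:summability}. Each subsequent block shares a vertex with an already-placed block, so given that shared vertex (at most $m$ choices), its sum is again $O(1)$ by the supremum in~\ref{it:summability}. Multiplying gives $O(n)$. Here we also use $|\kappa_B|\le C_{|B|}$ for indicators, to absorb overlaps beyond the single shared vertex. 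This bookkeeping is the main obstacle: the sum over a block must be taken with one vertex pinned, so blocks that overlap the placed part in several vertices must be dominated by dropping the extra constraints. I will handle this by first bounding the sum over all $B\ni i$ with $|B|=L$, and then noting that additional coincidences only restrict the summation.

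\emph{Step 3: conclusion.} Let $Z_n=(S_n-\EE S_n)/\sqrt{\Var S_n}$. Its cumulants are $\kappa_1(Z_n)=0$, $\kappa_2(Z_n)=1$, and for $m\ge3$,
\[
\kappa_m(Z_n)=\kappa_m(S_n)/\Var(S_n)^{m/2}=O(n)/n^{m/2}\to0,
\]
using~\ref{it:variance}. The cumulants therefore converge to those of $\mathcal N(0,1)$. Since the Gaussian is determined by its moments, the method of moments gives convergence in distribution.
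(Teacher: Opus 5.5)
\paragraph{A gap in Step 1.} Your Steps 2 and 3 are sound. The spanning-tree count gives $O(n)$: use pinned-vertex sums for blocks that bring new indices, and the bound $|\kappa_B|\le C_{|B|}$ for blocks that bring none. The problem is Step 1, which as written is wrong, and it is the step that carries the content of the proposition.

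The displayed identity fails already for $m=2$, $i_1=i_2=j$. Both partitions of $\{1,2\}$ qualify, so your right-hand side is $p_j+p_j^2$. But $\kappa(X_j,X_j)=\Var X_j=p_j-p_j^2$, with $p_j=\EE X_j$.

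The cause is that you applied Leonov--Shiryaev backwards. Let $\sigma$ be the partition of positions into label classes and $\hat 1$ the one-block partition. The cumulants-of-products formula applied to $X_j=X_j^{r_j}$ reads
\[
\kappa_A=\sum_{\pi\vee\sigma=\hat 1}\ \prod_{B\in\pi}\kappa(X_{i_s}: s\in B).
\]
This expresses the distinct-index cumulant through \emph{repeated}-index cumulants, not the other way round.

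The moment--cumulant route you allude to does not give connectivity directly either. Expanding moments into cumulants produces disconnected products such as $\kappa_{\{j\}}\kappa_{\{k\}}$, which cancel only after the terms are collected. Without that cancellation, Step 2 would only give $O(n^2)$. So you cannot take the connected-cover structure "as standard" from the formula you wrote; it is exactly what needs proof.

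\paragraph{How to repair it.} Invert the displayed identity recursively. Isolating the $\pi=\hat 1$ term gives
\[
\kappa(X_{i_1},\ldots,X_{i_m})=\kappa_A-\sum_{\substack{\pi\neq\hat 1\\ \pi\vee\sigma=\hat 1}}\ \prod_{B\in\pi}\kappa(X_{i_s}:s\in B).
\]
Every factor on the right has fewer than $m$ arguments. By induction on $m$, each factor is an integer combination of products of distinct-index cumulants whose supports form connected covers of $\mathrm{supp}(B)$. The condition $\pi\vee\sigma=\hat 1$ says precisely that these supports glue into a connected cover of $A$. This yields the structure you claimed: signed integer coefficients, with the number of terms bounded in terms of $m$. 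With that lemma in place, your argument goes through.

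\paragraph{Comparison with the paper.} The paper sidesteps the issue by performing only one step of this inversion at a time. It applies Leonov--Shiryaev to $X_r\cdot X_r$ for a single repeated pair, and inducts directly on the per-site quantity $\sup_i\kappa_m^{*,i}(n)$ rather than on an explicit expansion. The two factors of each remainder term share $X_r$ and are summed one after the other, which is the two-block case of your tree argument. Your repaired route yields a clean, reusable lemma: repeated-index cumulants of indicators are connected polynomials in the distinct-index ones. The cost is having to prove that lemma; the paper's route is shorter.
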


\subsection{Proof via idempotence}
\label{sec:proof-idempotence}

By multilinearity, $\kappa_m(S_n)$ is a sum over
tuples $(\alpha_1, \ldots, \alpha_m) \in [n]^m$.
Idempotence ($X_i^2 = X_i$) ensures that every
repeated-index cumulant can be expressed as a
polynomial in the distinct-index cumulants~$\kappa_A$
and the marginal probabilities $p_i = \EE X_i$.
The following example illustrates this reduction
for $m = 2$ and~$3$; although the resulting formulas
are explicit, they grow rapidly in complexity with~$m$,
and we do not attempt to find closed forms.
Instead, \Cref{lem:cumulant-bound} below provides
the bound $\kappa_m(S_n) = O(n)$ that suffices for
the CLT, bypassing the need for exact expressions.

\begin{example}[Small cases]
\label{ex:small-cumulants}
Write $p_i = \EE X_i$.

For $m = 2$, multilinearity gives
$\kappa_2(S_n) = \sum_{i,j} \Cov(X_i, X_j)$.
On the diagonal, idempotence
($\EE X_i^2 = p_i$) yields
$\Cov(X_i, X_i) = p_i(1 - p_i)$, so
\[
\kappa_2(S_n)
= \sum_i p_i(1 - p_i)
  + \sum_{i \neq j} \Cov(X_i, X_j).
\]

For $m = 3$, the Leonov--Shiryaev formula
and idempotence reduce every repeated-index
cumulant to lower-order ones:
\begin{align*}
\kappa(X_i, X_i, X_k)
  &= (1 - 2p_i)\,\Cov(X_i, X_k),
  \\
\kappa_3(X_i)
  &= p_i(1-p_i)(1-2p_i).
\end{align*}
Thus
\begin{align*}
\kappa_3(S_n)
&= \sum_{\substack{i,j,k \\ \text{distinct}}}
     \kappa(X_i, X_j, X_k)
\\
&\quad
  + 3 \sum_{i \neq j}
     (1 - p_i - p_j)\,\Cov(X_i, X_j)
\\
&\quad
  + \sum_i p_i(1-p_i)(1-2p_i).
\end{align*}
Every term is determined by the
distinct-index cumulants $\kappa_A$ and
the marginal probabilities~$p_i$.
\end{example}

\begin{lemma}[Cumulant bound for idempotent variables]
	\label{lem:cumulant-bound}
	Under assumption~\ref{it:summability} of
	\Cref{prop:clt-indicators}, for each
	fixed~$m \geq 1$ define
	\[
	\kappa_m^{*,i}(n) \;:=\;
	\sum_{\alpha_2, \ldots, \alpha_m \in [n]}
	|\kappa(X_i, X_{\alpha_2}, \ldots, X_{\alpha_m})|.
	\]
	Then
	\[
		\sup_{i \in [n]} \kappa_m^{*,i}(n) = O(1),
	\]
	where the implied constant depends on~$m$ but not
	on~$n$.
\end{lemma}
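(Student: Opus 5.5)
The plan is to reduce each repeated-index cumulant to a polynomial in the distinct-index cumulants $\kappa_D$ whose monomials are ``connected'', and then sum over tuples by exploring a connected structure one block at a time, invoking assumption~\ref{it:summability} at each step.

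\emph{Reduction to distinct-index cumulants.} Fix $i$ and a tuple $\alpha=(\alpha_1,\ldots,\alpha_m)$ with $\alpha_1=i$, and let $A=\{\alpha_1,\ldots,\alpha_m\}$ be its set of distinct values, so $|A|\le m$ and $i\in A$. By the moment--cumulant formula,
\[
\kappa(X_{\alpha_1},\ldots,X_{\alpha_m})=\sum_{\pi}\mu(\pi,\hat 1)\prod_{B\in\pi}\EE\Bigl[\prod_{b\in B}X_{\alpha_b}\Bigr],
\]
where $\pi$ runs over set partitions of $[m]$. By idempotence, $\EE[\prod_{b\in B}X_{\alpha_b}]=\EE[\prod_{j\in\alpha(B)}X_j]$, with $\alpha(B)$ the set of distinct values on $B$. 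Expanding each such distinct-index moment back into cumulants gives
\[
\kappa(X_{\alpha_1},\ldots,X_{\alpha_m})=P_\alpha\bigl(\kappa_D : D\subseteq A\bigr),
\]
a polynomial with integer coefficients bounded by a constant $C_m$. The polynomial depends only on the repetition pattern of $\alpha$, of which there are finitely many for fixed~$m$.

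\emph{Connectivity of the monomials.} The key structural claim is that every monomial $\prod_r \kappa_{D_r}$ in $P_\alpha$ has blocks $D_r$ forming a connected hypergraph on $A$. This is what lets the sum over tuples be controlled by the summability assumption; I expect it to be the main obstacle. I would prove it by a universality argument, since $P_\alpha$ is a formal identity valid for all families of $0$-$1$ variables.

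\begin{enumerate}[label=(\alph*)]
\item Take a split $A=A'\sqcup A''$ and choose the variables indexed by $A'$ independent of those indexed by $A''$. Then the left side vanishes, because a joint cumulant of two independent groups vanishes. Every $\kappa_D$ with $D$ meeting both parts also vanishes.
\item This alone does not kill the disconnected monomials; one has to show their total coefficient is zero. I would do this by grading: attach formal weights $t_D$ to $\kappa_D$ and compare with the Leonov--Shiryaev expansion. Concretely, realize $X_{\alpha_b}$ as the product $X_j$ for $b$ in the fiber over $j$. Leonov--Shiryaev then writes the repeated-index cumulant as a sum over partitions $\sigma$ of the fibers satisfying $\sigma\vee\pi_\alpha=\hat 1$. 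Here $\pi_\alpha$ is the kernel partition of $\alpha$ (grouping $b,b'$ with $\alpha_b=\alpha_{b'}$), and the join condition is exactly connectivity of the induced blocks on $A$.
\item If this bookkeeping becomes delicate, a fallback is to verify connectivity directly from the Möbius-function cancellation, in the same way one shows that cumulants are connected moments.
\end{enumerate}

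\emph{Summation.} Since $|\kappa_D|\le C_{|D|}$ for $0$-$1$ variables, the connectivity claim gives
\[
\kappa_m^{*,i}(n)\le C'_m\sum_{A\ni i,\ |A|\le m}\ \sum_{\substack{\{D_r\}\ \text{connected on }A}}\ \prod_r|\kappa_{D_r}|.
\]
Each $A$ arises from at most $m^m$ tuples, so the factor $C'_m$ absorbs that multiplicity. I then order the blocks so that $D_1\ni i$ and each later $D_r$ meets $D_1\cup\cdots\cup D_{r-1}$, and sum out the blocks in this order:

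\begin{enumerate}[label=(\roman*)]
\item For the first block, the sum over $D_1\ni i$ with $|D_1|=L$ is $O(1)$ by assumption~\ref{it:summability}.
\item For each later block, choose which of the at most $m$ already revealed points it contains, then sum over $D_r$ containing that point. This is again $O(1)$ by assumption~\ref{it:summability}, uniformly in the chosen point because of the $\sup$.
\end{enumerate}

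Singleton blocks $D_r=\{j\}$ contribute $p_j\le 1$ and can only sit at already revealed points. The number of blocks and the number of shapes are bounded in terms of $m$, so $\sup_i\kappa_m^{*,i}(n)=O(1)$ uniformly in~$n$.
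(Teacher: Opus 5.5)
Your argument is correct, and it takes a genuinely different route from the paper.

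\textbf{The paper's route.} The paper never writes down a global expansion. It inducts on $m$ directly on the summed quantity $\sup_i\kappa^{*,i}_m(n)$. Tuples are split into the all-distinct class, handled by assumption~(i), and classes $T_{pq}$ with one prescribed coincidence $\alpha_p=\alpha_q=r$. For a single such coincidence it applies Leonov--Shiryaev to the product $X_r\cdot X_r$ and uses $X_r^2=X_r$ to get $\kappa(X_r,X_r,Z)=\kappa(X_r,Z)-R$. Here $R$ is a sum of products of two lower-order cumulants that share $X_r$. Both pieces are then bounded by the inductive hypothesis, and the shared $X_r$ is what lets the sums over free indices chain.

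\textbf{Your route.} You first establish a structural cluster expansion: every repeated-index cumulant is a bounded-coefficient polynomial in distinct-index cumulants, each of whose monomials is a connected cover of the value set. You then sum by a tree exploration, invoking assumption~(i) once per block.

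\textbf{What each buys.} Your version produces a reusable statement and makes the role of connectivity explicit. The paper's is shorter and avoids the connectivity lemma entirely, because it removes one coincidence at a time and lets the induction on $\kappa^{*,i}$ do the bookkeeping.

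\textbf{Correction to step (b).} As written, step (b) misstates what Leonov--Shiryaev gives. Let $F_j$ be the fiber over $j$. By idempotence, $\kappa_A=\kappa\bigl(\prod_{b\in F_j}X_{\alpha_b}: j\in A\bigr)$, and the formula gives
\[
\kappa_A=\sum_{\sigma\vee\pi_\alpha=\hat 1}\ \prod_{C\in\sigma}\kappa(X_{\alpha_c}:c\in C).
\]
So it is the \emph{distinct}-index cumulant that is expanded, and your repeated-index cumulant is the $\sigma=\hat 1$ term. Isolate that term and induct on $m$. This closes the argument, because $\sigma\vee\pi_\alpha=\hat 1$ is exactly connectivity of $\{\alpha(C)\}_{C\in\sigma}$ on $A$, and a product of connected monomials over a connected cover is connected.

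\textbf{Step (a) already suffices.} Your step (a) works without (b), provided you add one observation. The realizable cumulant vectors of $0$-$1$ families indexed by $A'$, and likewise by $A''$, fill sets with nonempty interior, since the moments are an affine image of the probability simplex. Hence $P_\alpha$ with all crossing variables set to zero vanishes as a formal polynomial. That kills every monomial with no block crossing the split $A'\sqcup A''$.
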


\begin{proof}[Proof of \Cref{lem:cumulant-bound}]
We proceed by induction on~$m$.
The base case $m = 1$ is immediate:
$\kappa_1^{*,i}(n) = |\EE X_i| \leq 1$ for
every~$i$.

For the inductive step, fix~$i \in [n]$ and bound
$\kappa_m^{*,i}(n)$.
Cover the tuples $(\alpha_2, \ldots, \alpha_m)$ by
$\binom{m}{2} + 1$ classes (not necessarily
disjoint). For each pair of positions
$1 \leq p < q \leq m$, let
\[
T_{pq} = \bigl\{(\alpha_2, \ldots, \alpha_m)
  \in [n]^{m-1}
  : \text{positions } p \text{ and } q
  \text{ carry the same value}\bigr\},
\]
where position~$1$ carries the fixed value~$i$.
Every tuple with at least one repeated index
belongs to some~$T_{pq}$, so
\begin{equation}\label{eq:class-bound}
\kappa_m^{*,i}(n)
\;\leq\;
\underbrace{(m{-}1)!
  \sum_{\substack{A \subseteq [n],\;
  i \in A \\ |A|=m}} |\kappa_A|}_{%
  \text{all distinct: } O(1)}
\;+\;
\sum_{1 \leq p < q \leq m}
  T_{pq}^{*,i},
\end{equation}
where $T_{pq}^{*,i}$ is the sum of
$|\kappa(X_i, X_{\alpha_2}, \ldots,
X_{\alpha_m})|$ over $T_{pq}$.
The all-distinct term is~$O(1)$ by
assumption~\ref{it:summability}. It remains to show
$T_{pq}^{*,i} = O(1)$ for each pair~$(p,q)$.

\medskip

\emph{Idempotence reduction.}
Fix a pair~$(p,q)$ and a tuple
$(\alpha_2, \ldots, \alpha_m) \in T_{pq}$.
Write $r = \alpha_p = \alpha_q$ for the repeated
value (with the convention $\alpha_1 = i$), and
write $Z_1, \ldots, Z_{m-2}$ for the variables
$X_{\alpha_s}$, $s \in [m] \setminus \{p,q\}$,
in increasing order of~$s$.
(When $p \geq 2$, the pinned variable~$X_i$
is one of the~$Z_j$.)
The $m$-argument cumulant reads
\[
\kappa(X_{\alpha_1}, \ldots, X_{\alpha_m})
= \kappa(X_r, X_r, Z_1, \ldots, Z_{m-2}),
\]
since $X_r$ occupies both positions~$p$ and~$q$.

The Leonov--Shiryaev product
formula~\cite[Theorem~1]{LeonovShiryaev1959}
(see also Speed~\cite[Proposition~4.3]{Speed1983})
gives
\begin{equation}\label{eq:product-formula}
\kappa(X_r \cdot X_r,\, Z_1, \ldots, Z_{m-2})
= \kappa(X_r, X_r, Z_1, \ldots, Z_{m-2})
+ R,
\end{equation}
where
\begin{equation}\label{eq:R-def}
R = \sum_{S \sqcup T = [m-2]}
  \kappa(X_r,\, Z_s : s \in S)
  \;\cdot\;
  \kappa(X_r,\, Z_t : t \in T).
\end{equation}
The sum runs over all $2^{m-2}$ ordered partitions
of~$[m-2]$ into two blocks (either block may be
empty, giving $\kappa_1 = \EE$).
Every term in~$R$ is a product of two cumulants
whose orders sum to~$m$ and are each
strictly less than~$m$.

By idempotence ($X_r^2 = X_r$), the left-hand
side of~\eqref{eq:product-formula} equals
$\kappa(X_r, Z_1, \allowbreak \ldots, Z_{m-2})$.
Rearranging:
\begin{equation}\label{eq:ls-step}
\kappa(X_r, X_r, Z_1, \ldots, Z_{m-2})
= \kappa(X_r, Z_1, \ldots, Z_{m-2}) - R.
\end{equation}
This is the key step: the $m$-argument cumulant
(left) equals an $(m{-}1)$-argument cumulant minus
the remainder~$R$.
Taking absolute values and summing
over all tuples
$(\alpha_2, \ldots, \alpha_m) \in T_{pq}$:
\[
T_{pq}^{*,i}
\;\leq\;
\sum_{(\alpha_2, \ldots, \alpha_m) \in T_{pq}}
  |\kappa(X_r,\, Z_1, \ldots, Z_{m-2})|
\;+\;
\sum_{(\alpha_2, \ldots, \alpha_m) \in T_{pq}}
  |R|.
\]

\medskip

\emph{Bounding the first sum.}
The $(m{-}1)$-argument cumulant
$\kappa(X_r, Z_1, \ldots, Z_{m-2})$
has $m - 2$ free indices (the constraint
$\alpha_p = \alpha_q$ eliminates one).

If $p = 1$ (so $r = i$), the cumulant is
$\kappa(X_i, Z_1, \ldots, Z_{m-2})$;
summing over the $m - 2$ free indices gives
at most $\kappa_{m-1}^{*,i}(n) = O(1)$
by induction.

If $p \geq 2$, the pinned variable~$X_i$ is one
of the~$Z_j$, so the cumulant still
involves~$X_i$; summing over the $m - 2$ free
indices (including~$r$) again gives
at most $\kappa_{m-1}^{*,i}(n) = O(1)$.

\medskip

\emph{Bounding the remainder sum.}
Each term in~$R$ (see~\eqref{eq:R-def}) is
\[
|\kappa(X_r,\, Z_s : s \in S)|
\;\cdot\;
|\kappa(X_r,\, Z_t : t \in T)|
\]
with $S \sqcup T = [m-2]$.
Both factors contain~$X_r$; the pinned
variable~$X_i$ sits in one factor
(or equals~$X_r$ when~$p = 1$).

\emph{Case $p = 1$} ($r = i$):
both factors have~$X_i$ as their first argument,
and the free indices in~$S$ and~$T$ are disjoint.
The sum factors:
\[
\sum_{(\alpha_2, \ldots, \alpha_m) \in T_{1q}}
  \!\!\!
  |\kappa(X_i, Z_S)| \cdot |\kappa(X_i, Z_T)|
= \kappa_{|S|+1}^{*,i} \cdot \kappa_{|T|+1}^{*,i}
= O(1)
\]
by the inductive hypothesis
(both orders are~$< m$).

\emph{Case $p \geq 2$}:
$X_i$ is one of the~$Z_j$; say
$Z_{j_0} = X_i$ belongs to the~$S$-factor.
Write $S_0 = S \setminus \{j_0\}$, so the
first factor is
$\kappa(X_r, X_i, Z_s : s \in S_0)$
and the second is
$\kappa(X_r, Z_t : t \in T)$.
Sum the second factor over its free indices
(for each fixed~$r$) to get at most
$\sup_{j} \kappa_{|T|+1}^{*,j}(n) = O(1)$;
then sum the first factor over its free indices
(including~$r$) to get at most
$\kappa_{|S_0|+2}^{*,i}(n) = O(1)$.
The product is~$O(1)$.

Since~$R$ has $2^{m-2}$ terms (a constant
depending only on~$m$),
$\sum_{T_{pq}} |R| = O(1)$.

Combining: $T_{pq}^{*,i} = O(1)$ for each
pair~$(p,q)$, uniformly in~$i$,
and~\eqref{eq:class-bound} gives
$\sup_i \kappa_m^{*,i}(n) = O(1)$.
\end{proof}

\begin{proof}[Proof of \Cref{prop:clt-indicators}]
By multilinearity of cumulants,
\[
\kappa_m(S_n)
= \sum_{\alpha_1, \ldots, \alpha_m \in [n]}
  \kappa(X_{\alpha_1}, \ldots, X_{\alpha_m}).
\]
Taking absolute values and grouping by~$\alpha_1$,
\[
|\kappa_m(S_n)|
\;\leq\;
\sum_{i \in [n]} \kappa_m^{*,i}(n)
\;=\; O(n)
\]
by \Cref{lem:cumulant-bound}.
For the standardized variable
$\tilde{S}_n = (S_n - \EE S_n)/\sqrt{\Var S_n}$:
\[
\kappa_m(\tilde{S}_n)
= \frac{\kappa_m(S_n)}{(\Var S_n)^{m/2}}
= O(n^{1-m/2}) \to 0
\qquad \text{for } m \geq 3,
\]
while $\kappa_1(\tilde{S}_n) = 0$ and
$\kappa_2(\tilde{S}_n) = 1$. Since all cumulants of
order~$\geq 3$ vanish, the method of cumulants gives
$\tilde{S}_n \xrightarrow{d} \mathcal{N}(0,1)$.
\end{proof}

\subsection{Remarks}
\label{sec:clt-remarks}

\begin{remark}[Translation-invariant processes]
\label{rem:translation-invariance}
For translation-invariant indicators
(e.g., a stationary point process
on~$\ZZ \cap [1, n]$),
assumption~\ref{it:summability} follows from
the weaker global bound
$\sum_{|A| = L} |\kappa_A| = O(n)$:
by stationarity, each~$i$ contributes equally
to $\sum_i \sum_{A \ni i,\, |A| = L}
|\kappa_A| = L \sum_{|A|=L} |\kappa_A|$,
so the per-site sum is $O(1)$.
\end{remark}

\begin{remark}[Berry--Esseen bound]
\label{rem:berry-esseen}
The bound
$|\kappa_m(S_n)| = O(n)$
from \Cref{lem:cumulant-bound} gives
$|\kappa_m(\tilde{S}_n)| = O(n^{1-m/2})$
for the standardized sum. Since $|X_i| \leq 1$,
this satisfies the
\emph{Statulevi\v{c}ius condition}
$|\kappa_j(\tilde{S}_n)|
\leq j!^{1+\gamma}/\Delta^{j-2}$
with $\Delta \asymp \sqrt{n}$ and suitable~$\gamma$
(the boundedness of the indicators ensures
$\gamma < \infty$).
This yields a Berry--Esseen bound
$\sup_x |F_{\tilde{S}_n}(x) - \Phi(x)|
= O(n^{-1/2})$
via Corollary~2.1 of Saulis and
Statulevi\v{c}ius~\cite{SaulisStatulevicius1991},
and moderate deviation estimates via their
Lemma~2.3 (due to Rudzkis, Saulis, and
Statulevi\v{c}ius~\cite{RudzkisSaulisStatulevicius1978});
see also D\"oring, Jansen, and
Schubert~\cite{DoringJansenSchubert2022},
Sections~2.1--2.2.
\end{remark}

\begin{remark}[Role of idempotence]
\label{rem:idempotence}
The inductive step~\eqref{eq:ls-step} uses
idempotence ($X_r^2 = X_r$) to replace
$X_r \cdot X_r$ on the left-hand side
of~\eqref{eq:product-formula} by a single~$X_r$,
reducing the $m$-argument cumulant to an
$(m{-}1)$-argument one. For general bounded random
variables, $\kappa(X_r \cdot X_r, Z_1, \ldots)$
would be a genuinely different object from any
ordinary cumulant, and the induction would not
close without stronger summability assumptions.
\end{remark}

\section{Nonzero colorings for $k = 3$}
\label{sec:appendix-colorings}

For $k = 3$, there are $90$ colorings (sequences of
length~$6$ over $\{1, 2, 3\}$ with each color used
exactly twice), of which $14$ have
$c(I) \neq 0$ (\Cref{thm:coloring-formula}).
Each diagram below shows the six simplex positions
$y_1 < \cdots < y_6$ (top) connected to the three
walls $1, 2, 3$ (bottom) according to the
coloring~$I$.
Wall~$1$: solid blue; wall~$2$: dashed red;
wall~$3$: dotted green.

The \emph{mirror symmetry} reverses the coloring
sequence and relabels $1 \leftrightarrow 3$,
$2 \leftrightarrow 2$.
The coefficient~$c(I)$ is invariant under this
symmetry.  The $14$ nonzero colorings split into
$4$~self-mirror colorings and $5$~mirror pairs.

\subsection{The unique negative coloring ($c = -8$)}

The only coloring with $c(I) < 0$ is self-mirror:

\bigskip

\begin{center}
\coloringdiagram{2}{1}{3}{1}{3}{2}{-8}
\end{center}

\subsection{Positive colorings ($c = +8$)}

The remaining $13$ nonzero colorings all have
$c(I) = +8$.

\subsubsection{Self-mirror}

\bigskip

\begin{center}
\coloringdiagram{2}{3}{1}{3}{1}{2}{+8}%
\qquad\qquad
\coloringdiagram{3}{2}{1}{3}{2}{1}{+8}%
\end{center}

\bigskip

\begin{center}
\coloringdiagram{3}{2}{3}{1}{2}{1}{+8}%
\end{center}

\subsubsection{Mirror pairs}

\bigskip

\begin{center}
\coloringdiagram{2}{1}{3}{2}{3}{1}{+8}%
\quad$\longleftrightarrow$\quad
\coloringdiagram{3}{1}{2}{1}{3}{2}{+8}%
\end{center}

\bigskip

\begin{center}
\coloringdiagram{2}{3}{1}{2}{3}{1}{+8}%
\quad$\longleftrightarrow$\quad
\coloringdiagram{3}{1}{2}{3}{1}{2}{+8}%
\end{center}

\bigskip

\begin{center}
\coloringdiagram{2}{3}{1}{3}{2}{1}{+8}%
\quad$\longleftrightarrow$\quad
\coloringdiagram{3}{2}{1}{3}{1}{2}{+8}%
\end{center}

\bigskip

\begin{center}
\coloringdiagram{2}{3}{2}{1}{3}{1}{+8}%
\quad$\longleftrightarrow$\quad
\coloringdiagram{3}{1}{3}{2}{1}{2}{+8}%
\end{center}

\bigskip

\begin{center}
\coloringdiagram{3}{1}{2}{3}{2}{1}{+8}%
\quad$\longleftrightarrow$\quad
\coloringdiagram{3}{2}{1}{2}{3}{1}{+8}%
\end{center}

\section*{Acknowledgments}

We thank Theodoros Assiotis, Bal\'azs B\'ar\'any,
Maciej Do\l{}ęga, Sho Matsumoto, B\'alint T\'oth,
\'Akos Urb\'an, Oleg Zaboronski, and Karol \.Zyczkowski
for stimulating discussions and helpful literature
suggestions.

We thank Richard Arratia for generously providing
access to his PhD thesis~\cite{Arratia1979}.

Claude Code (Anthropic) was used as an assistant during
manuscript preparation.

\printbibliography

\end{document}